\title{2-CY-tilted algebras that are not Jacobian}
\author{Sefi Ladkani}
\address{%
Institut des Hautes \'{E}tudes Scientifiques \\
Le Bois Marie, 35, route de Chartres \\
91440 Bures-sur-Yvette, France}
\email{sefil@ihes.fr}
\DeclareMathOperator{\ch}{char}
\DeclareMathOperator{\coker}{Coker}
\DeclareMathOperator{\End}{End}
\DeclareMathOperator{\hh}{H}
\DeclareMathOperator{\HC}{HC}
\DeclareMathOperator{\HH}{HH}
\DeclareMathOperator{\Hom}{Hom}
\DeclareMathOperator{\id}{id}
\DeclareMathOperator{\img}{Im}
\DeclareMathOperator{\modf}{mod}
\DeclareMathOperator{\rank}{rank}
\DeclareMathOperator{\stmod}{\underline{mod}}
\DeclareMathOperator{\tensor}{\wh{\otimes}}
\newcommand{\cC}{\mathcal{C}}
\newcommand{\cD}{\mathcal{D}}
\newcommand{\cH}{\mathcal{H}}
\newcommand{\dcH}{\cD^b(\cH)}
\newcommand{\gL}{\Lambda}
\newcommand{\bQ}{\mathbb{Q}}
\newcommand{\bSigma}{\bar{\Sigma}}
\newcommand{\bZ}{\mathbb{Z}}
\newcommand{\wh}{\widehat}
\newcommand{\wt}{\widetilde}
\newcommand{\vphi}{\varphi}
\newtheorem{lemma}{Lemma}[section]
\newtheorem{propp}[lemma]{Proposition}
\newtheorem{prop}{Proposition}
\newtheorem*{cor*}{Corollary}
\theoremstyle{definition}
\newtheorem{defn}[lemma]{Definition}
\newtheorem{example}[lemma]{Example}
\newtheorem*{defn*}{Definition}
\newtheorem*{example*}{Example}
\newtheorem{remark}{Remark}
\numberwithin{equation}{section}
\begin{document}

\begin{abstract}
Over any field of positive characteristic we construct 2-CY-tilted
algebras that are not Jacobian algebras of quivers with potentials.
As a remedy, we propose an extension of the notion of a potential,
called hyperpotential, that allows to prove that certain
algebras defined over fields of positive characteristic are
2-CY-tilted even if they do not arise from potentials.

In another direction, we compute the fractionally Calabi-Yau
dimensions of certain orbit categories of fractionally CY triangulated
categories. As an application, we construct a cluster category of
type $G_2$.
\end{abstract}

\maketitle

\section{Introduction}

A 2-CY-tilted algebra is an endomorphism algebra of a cluster-tilting object in
a 2-Calabi-Yau triangulated category.
There are close connections between 2-CY-tilted algebras and 
Jacobian algebras of quivers with potentials as introduced by Derksen, Weyman
and Zelevinsky~\cite{DWZ08}.
On the one hand, already in~\cite{DWZ08} it is shown that cluster-tilted
algebras of Dynkin type, which are particular kind of 2-CY-tilted algebras,
are Jacobian algebras. Later, Buan, Iyama, Reiten and Smith have shown
in~\cite{BIRS11} that all cluster-tilted algebras, and more generally
the 2-CY-tilted algebras arising from cluster categories associated
in~\cite{BIRS09} to words in Coxeter groups are Jacobian.
Moreover, they have shown that under some conditions
the notions of mutation of cluster-tilting objects in a 2-CY
category and mutation of quivers with potentials are compatible.

On the other hand, by the work of Amiot~\cite{Amiot09},
any finite-dimensional Jacobian algebra is 2-CY-tilted.
It is therefore natural to ask whether any 2-CY-tilted algebra is a
Jacobian algebra of a quiver with potential~\cite[Question~2.20]{Amiot11}.
The purpose of this note is twofold.
First, we provide a negative answer to this question
over any field of positive characteristic. Our examples are given by certain
self-injective Nakayama algebras which are also known as truncated cycle
algebras. Second, we show that it is actually possible to slightly
extend the notion of a potential in order to exclude this kind of examples.
Let us explain the motivation behind such extension.

Since 2-CY-tilted algebras have some remarkable homological and
structural properties~\cite{KellerReiten07}, it is of interest to know
that certain finite-dimensional algebras defined in a uniform way over
all fields (e.g.\ as quivers with relations ``over $\bZ$'') are
2-CY-tilted. Often this is done by ``integrating'' the defining relations
to give a potential so that the algebra could be seen as a Jacobian
algebra. However, there are cases where such ``integration'' is only
possible provided we restrict the characteristic of the field.

Consider for example the algebra $\gL_K = K[x]/(x^{n-1})$ over a field
$K$ for some $n>2$,
which could be described as a quiver with one vertex, one loop $x$
and a relation $x^{n-1}$. As long as the characteristic of $K$
does not divide $n$, this algebra is Jacobian (take the potential $x^n$) and
hence 2-CY-tilted. However, the AR-quiver of $\gL_K$ and the fact that
it is symmetric do not depend on the field $K$, so one would like to
say that $\gL_K$ is 2-CY-tilted regardless of the characteristic of $K$.
Another example of the same kind is given by the remark of
Ringel~\cite[\S14]{Ringel11} that barbell algebras with two loops are
2-CY-tilted, provided that one assumes
that the characteristic of the ground field is not 3.

As some of the constructions involving a quiver with potential rely only on
its cyclic derivatives (see e.g.\ the definitions of the Ginzburg
dg-algebra or the Jacobian algebra), our idea is to replace these
derivatives with arbitrary elements and to concentrate on the required
conditions that these elements have to satisfy in order for such
constructions to make sense. This will avoid the need
to ``integrate'' relations into one potential and will allow to prove in
a characteristic-free manner that certain algebras are 2-CY-tilted.

\subsection{Hyperpotentials}

Recall that the construction of Amiot~\cite{Amiot09} starts with a 
dg-algebra $\Gamma$ which is concentrated in non-positive degrees,
homologically smooth, bimodule 3-CY and whose 0-th cohomology
$\hh^0(\Gamma)$ is finite-dimensional, and produces a 
2-CY triangulated category with a cluster-tilting object
whose endomorphism algebra is $\hh^0(\Gamma)$.
This construction is applied to a quiver with potential $(Q,W)$ over
a field $K$
by considering its Ginzburg dg-algebra defined in~\cite{Ginzburg06}.
Keller proves in~\cite{Keller11} that the Ginzburg dg-algebra has the
required properties by showing that it is quasi-isomorphic
to the deformed 3-Calabi-Yau completion of the 
path algebra $KQ$ by an element in
$\HH_1(KQ)$
which is the image of the potential $W$ under Connes' map
$\HC_0(KQ) \to \HH_1(KQ)$.

These considerations raise the possibility of working from the outset with
elements in $\HH_1$ (and not in $\HC_0$) and motivate the following
definition.
Indeed, Ginzburg's original definition in~\cite[\S5]{Ginzburg06} starts
with a cyclic 1-form satisfying certain conditions, which is not necessarily
a differential of a potential.

\begin{defn*}
Let $K$ be a commutative ring and let $Q$ be a quiver.
Denote by  $Q_0$, $Q_1$ the sets of vertices and arrows of $Q$
and by $A=\wh{KQ}$ the completed path algebra of $Q$ over $K$
(i.e.\ its elements of are infinite $K$-linear combinations of paths in $Q$).
For any $i \in Q_0$ let $e_i \in A$ be the idempotent corresponding
to the path of length 0 starting at $i$.

A \emph{hyperpotential} on $Q$ is a collection of elements
$(\rho_\alpha)_{\alpha \in Q_1}$ in $A$ indexed by the arrows of $Q$ satisfying the
following conditions:
\begin{enumerate}
\renewcommand{\theenumi}{\roman{enumi}}
\item
If $\alpha \colon i \to j$ then $\rho_\alpha \in e_j A e_i$.
In other words, $\rho_\alpha$ is a (possibly infinite) linear combination
of paths starting at $j$ and ending at $i$.

\item \label{it:comm}
$\sum_{\alpha \in Q_1} [\alpha, \rho_\alpha] = 0$ in $A$.
\end{enumerate}
\end{defn*}

Hyperpotentials represent elements in $\HH_1(A)$, and
any potential $W \in \HC_0(A)$ gives rise to a hyperpotential by considering
its cyclic derivatives $(\partial_\alpha W)_{\alpha \in Q_1}$. Conversely, when the
ring $K$ contains $\bQ$, any hyperpotential arises in this way, so there
is nothing new. However, when $K$ does not contain $\bQ$ (e.g.\ when it is
a field of positive characteristic) there are hyperpotentials that do
not arise from potentials but nevertheless one would like to attach to them
suitable 3-CY and 2-CY categories.

In order to do that, one defines the \emph{Ginzburg dg-algebra}
$\Gamma(Q,(\rho_\alpha))$ of a hyperpotential $(Q,(\rho_\alpha))$
in the usual way, see~\cite[\S5.2]{Ginzburg06}
and~\cite[\S2.6]{KellerYang11};
Let $\wt{Q}$ be the graded quiver whose set of vertices is $Q_0$ and
whose arrows are the arrows of $Q$ (in degree $0$)
together with an arrow
$\alpha^* \colon j \to i$ of degree $-1$
for each arrow $\alpha \colon i \to j$ in $Q_1$
and a loop $t_i$ of degree $-2$ at each vertex $i \in Q_0$.
As a graded algebra, $\Gamma(Q,(\rho_\alpha))$ is the completion of
the graded path algebra $K\wt{Q}$ with respect to path length
(so that each graded piece consists of the infinite linear combinations
of paths of a given degree).
Its differential is defined as the
continuous linear map homogeneous of degree 1
which satisfies the Leibniz rule and whose values on the generators are
given by
\begin{align*}
d(\alpha)=0 &,& d(\alpha^*)=\rho_\alpha &,&
d(t_i) = e_i \Bigl(\sum_{\beta \in Q_1} [\beta,\beta^*]\Bigr) e_i
\end{align*}
for each $i \in Q_0$ and $\alpha \in Q_1$.
Note that the condition $d^2=0$ is equivalent
to the condition~\eqref{it:comm} in the definition of hyperpotential.
The \emph{Jacobian algebra} of a hyperpotential $(Q,(\rho_\alpha))$
is defined as the 0-th cohomology of its Ginzburg dg-algebra.
Equivalently, it is the quotient of $\wh{KQ}$ by the closure of the ideal
generated by the elements $\rho_\alpha$ for $\alpha \in Q_1$.

By following the proof of~\cite[Theorem~6.3]{Keller11} by Keller we
deduce:
\begin{prop}
The Ginzburg dg-algebra of a hyperpotential is (topologically)
homologically smooth and 3-CY.
\end{prop}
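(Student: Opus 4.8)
The plan is to follow Keller's proof of \cite[Theorem~6.3]{Keller11} almost verbatim; the only new point is that the Hochschild class fed into his construction comes directly from a hyperpotential rather than from a potential, and nothing in that proof uses where the class originates. Write $R=\prod_{i\in Q_0}Ke_i$ for the semisimple subalgebra of $A=\wh{KQ}$, and let $V=\bigoplus_{\alpha\in Q_1}K\alpha$, regarded as an $R$-bimodule. The first step is to record that $A$ is (topologically) homologically smooth: it admits the finite projective bimodule resolution
\begin{equation*}
0\to A\tensor_R V\tensor_R A\xrightarrow{\ \iota\ }A\tensor_R A\xrightarrow{\ \mu\ }A\to 0 ,
\end{equation*}
where $\mu$ is the multiplication and $\iota(a\tensor\alpha\tensor b)=a\alpha\tensor b-a\tensor\alpha b$, all tensor products being completed with respect to path length.

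The second step is to read the hyperpotential as a continuous Hochschild $1$-cycle. In the reduced bar complex of $A$ relative to $R$, the element $\xi=\sum_{\alpha\in Q_1}\rho_\alpha\tensor\alpha$ sits in degree $1$; condition~(i) makes it $R$-balanced, so that it genuinely defines a chain over $R$, and since the Hochschild differential sends $a_0\tensor a_1$ to $[a_0,a_1]$, condition~\eqref{it:comm} says exactly that $\xi$ is a cycle. Thus $(\rho_\alpha)_\alpha$ determines a class $[\xi]\in\HH_1(A)$ in continuous Hochschild homology. When $(\rho_\alpha)$ is the collection of cyclic derivatives of a potential $W$ one has $[\xi]=B([W])$ for Connes' map $B\colon\HC_0(A)\to\HH_1(A)$, which recovers Keller's situation; but no such provenance is needed below.

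The third step is to identify the Ginzburg dg-algebra $\Gamma=\Gamma(Q,(\rho_\alpha))$ with the deformed $3$-Calabi-Yau completion $\Pi_3(A,[\xi])$ of $A$ in the sense of \cite{Keller11}. Applying $\Hom_{A^e}(-,A^e)$ to the resolution above presents the inverse dualizing bimodule $\Theta=\mathbf{R}\Hom_{A^e}(A,A^e)$ as a two-term complex with terms $A\tensor_R A$ and $A\tensor_R V^{\vee}\tensor_R A$ (up to normalization); after the shift $\Theta[n-1]=\Theta[2]$ that enters the $3$-CY completion, these two summands become the degree $-1$ arrows $\alpha^*\colon j\to i$ dual to $\alpha\colon i\to j$ and the degree $-2$ loops $t_i$, so that the completed graded tensor algebra $T_A(\Theta[2])$ is precisely $K\wt{Q}$ with Ginzburg's grading. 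Keller's deformation twists the canonical differential of this tensor algebra by the derivation attached to a cycle representative of $[\xi]$; taking the representative $\sum_\alpha\rho_\alpha\tensor\alpha$ and unwinding the identifications, one checks on generators that $d(\alpha)=0$, $d(\alpha^*)=\rho_\alpha$ and $d(t_i)=e_i\bigl(\sum_{\beta}[\beta,\beta^*]\bigr)e_i$, i.e.\ Ginzburg's differential. (That $d^2=0$ is, as noted, equivalent to condition~\eqref{it:comm}.)

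With these identifications in place the Proposition is the content of the proof of \cite[Theorem~6.3]{Keller11}, which reduces the smoothness and $3$-CY properties of a Ginzburg dg-algebra to Keller's general theorem that the deformed $n$-Calabi-Yau completion of a homologically smooth dg algebra is again homologically smooth and bimodule $n$-Calabi-Yau (\cite[Theorem~4.8]{Keller11}), and which uses only that the deforming class lies in $\HH_1$; we apply it with $n=3$, $A=\wh{KQ}$ and the class $[\xi]$ of the hyperpotential. The one thing requiring care is that the whole argument must be carried out in the topological (pseudocompact) framework — completed tensor products, continuous $\HH_\bullet$, and the pseudocompact form of bimodule Calabi-Yau duality — rather than for ordinary dg algebras; but every step of Keller's proof already takes place there. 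The main obstacle I anticipate is the bookkeeping of the third step: verifying that Keller's abstract twist of the differential on $T_A(\Theta[2])$ reproduces Ginzburg's explicit formulas on $\alpha^*$ and $t_i$.
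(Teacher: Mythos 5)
Your proposal is correct and follows exactly the route the paper intends: the paper's entire proof is the remark that one follows Keller's proof of \cite[Theorem~6.3]{Keller11} (in its completed form, \cite[Theorem~A.17]{KellerYang11}), observing that the deforming class need only live in $\HH_1(A)$ --- which is precisely where hyperpotentials live, as the paper's Section~2.1 establishes via the same two-term bimodule resolution you write down. Your identification of $\Gamma(Q,(\rho_\alpha))$ with the deformed $3$-CY completion of $\wh{KQ}$ along the class $\sum_\alpha \rho_\alpha \otimes \alpha$ is the same argument, just spelled out in more detail than the paper gives.
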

Note that~\cite{Keller11} treats the non-completed version of the 
Ginzburg algebra. The corresponding statement for the completed version
appears in~\cite[Theorem~A.17]{KellerYang11}.

When $K$ is a field, the results of Amiot~\cite{Amiot09}
(see also~\cite[\S A.20]{KellerYang11} for the completed case)
imply the following.
\begin{cor*}
If the Jacobian algebra of a hyperpotential is finite-dimensional,
then it is 2-CY-tilted.
\end{cor*}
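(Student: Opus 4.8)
The plan is simply to verify that the Ginzburg dg-algebra $\Gamma=\Gamma(Q,(\rho_\alpha))$ meets the list of hypotheses required by Amiot's construction (in the completed form due to Keller--Yang) and then to invoke that construction. Recall that the input needed for Amiot's theorem is a dg-algebra that is (i) concentrated in non-positive degrees, (ii) homologically smooth, (iii) bimodule $3$-Calabi--Yau, and (iv) has finite-dimensional $0$-th cohomology; its output is a Hom-finite $2$-CY triangulated category in which the dg-algebra, viewed as a module over itself, becomes a cluster-tilting object with endomorphism algebra $\hh^0$.

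For (i), one reads off the graded quiver $\wt{Q}$: its arrows sit only in degrees $0$, $-1$ and $-2$, so $\Gamma$ is connective, i.e.\ concentrated in non-positive degrees; in particular $\hh^p(\Gamma)=0$ for $p>0$, and $\hh^0(\Gamma)$ is precisely the Jacobian algebra of the hyperpotential. Properties (ii) and (iii) are exactly the content of the preceding Proposition (following~\cite{Keller11} in the non-completed case and~\cite{KellerYang11} in the completed one). Property (iv) is the hypothesis of the Corollary itself, since by definition the Jacobian algebra is $\hh^0(\Gamma)$.

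Granting (i)--(iv), Amiot's theorem in the form valid for completed dg-algebras, see~\cite[\S A.20]{KellerYang11}, produces the generalized cluster category $\cC_\Gamma=\mathrm{per}\,\Gamma/\cD_{fd}(\Gamma)$, a Hom-finite $2$-Calabi--Yau triangulated category in which the image of $\Gamma$ is a cluster-tilting object satisfying $\End_{\cC_\Gamma}(\Gamma)\cong\hh^0(\Gamma)$. Therefore the Jacobian algebra, being the endomorphism algebra of a cluster-tilting object in a $2$-CY triangulated category, is $2$-CY-tilted.

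Essentially all of the genuine work has already been carried out in the Proposition; the only point that deserves a moment's attention is that passing from $KQ$ to its completion $\wh{KQ}$ does not disrupt Amiot's reduction argument — one still has the inclusion $\cD_{fd}(\Gamma)\subseteq\mathrm{per}\,\Gamma$ (using homological smoothness) and the Verdier quotient remains Hom-finite (using finite-dimensionality of $\hh^0(\Gamma)$). Both of these are established in the appendix of~\cite{KellerYang11}, so no new obstacle arises beyond this bookkeeping.
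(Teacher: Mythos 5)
Your proposal is correct and follows exactly the paper's route: the paper deduces the corollary by applying Amiot's construction (in the completed form of Keller--Yang, \S A.20) to the Ginzburg dg-algebra of the hyperpotential, whose connectivity, homological smoothness and bimodule 3-CY property are supplied by the preceding Proposition, and whose $\hh^0$ is the Jacobian algebra by definition. The only implicit hypothesis you should make explicit is that $K$ is a field, as the paper does when stating the corollary.
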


It may be possible to develop a theory of mutations for hyperpotentials
as done for potentials by Derksen, Weyman and Zelevinsky in~\cite{DWZ08}.
However, since our original motivation was to show that certain algebras
are 2-CY-tilted, we will not pursue this direction here.

In Section~\ref{sec:equiv} we define the notions of \emph{right equivalence}
and \emph{weak right equivalence} for hyperpotentials and
characterize them as arising from isomorphisms of the corresponding
Ginzburg dg-algebras having certain prescribed properties.
In particular, the categorical constructions do not distinguish between
weakly equivalent hyperpotentials.
We show that (weakly) right equivalent potentials
as defined in~\cite{DWZ08} and~\cite{GLS13} are so also when considered
as hyperpotentials.

\subsection{The examples}

We demonstrate the usefulness of the notion of a hyperpotential by
presenting the following class of examples providing a negative answer
to Question~2.20 in~\cite{Amiot11}.
Let $K$ be a field and let $m,e \geq 1$ such that $me \geq 3$.
Consider the $K$-algebra $\gL_{m,e}$ given as the path algebra of the quiver
$Q_m$ which is a cycle with $m$ vertices and arrows
$\alpha_1, \dots, \alpha_m$ (as shown in Figure~\ref{fig:Qm})
modulo the ideal generated by all paths of length $me-1$. 
It is well known that $\gL_{m,e}$ is a finite-dimensional self-injective
Nakayama algebra over $K$. Denote by $\ch K$ the characteristic of $K$.

\begin{prop} \label{p:QP}
Let $K$ be a field and let $m, e \geq 1$ such that $me \geq 3$.
\begin{enumerate}
\renewcommand{\theenumi}{\alph{enumi}}
\item \label{it:QP:hyper}
$\gL_{m,e}$ is the Jacobian algebra of the hyperpotential 
$(\rho_{\alpha_i})_{i=1}^m$ on $Q_m$ given by
\begin{align*}
\rho_{\alpha_i} = 
\alpha_{i+1} \ldots \alpha_{i-1}
(\alpha_i \alpha_{i+1} \ldots \alpha_{i-1})^{e-1}
&&
(1 \leq i \leq m)
\end{align*}
and hence it is always 2-CY-tilted.

\item \label{it:QP:W}
Let $W$ be any potential on the quiver $Q_m$. Then the Jacobian algebra
of $(Q_m,W)$ is either the completed path algebra $\widehat{KQ_m}$
or the algebra $\gL_{m,d}$ for some $d \geq 1$ not divisible by $\ch K$.

\item \label{it:QP:Wme}
Conversely, if $\ch K$ does not divide $e$,
then $\gL_{m,e}$ is the Jacobian algebra of the quiver with potential
$(Q_m,W_{m,e})$, where $W_{m,e} = (\alpha_1 \alpha_2 \ldots \alpha_m)^e$.

\item \label{it:QP:none}
If $\ch K$ divides $e$, then $\gL_{m,e}$ is not a Jacobian algebra of
a quiver with potential.
\end{enumerate}
\end{prop}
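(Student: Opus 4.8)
The plan is to treat the four parts in turn; the first three are direct computations in the completed path algebra $\widehat{KQ_m}$, while part \ref{it:QP:none} is where the content lies. Throughout I would write $\alpha_i\colon i\to i+1$ with vertex indices read modulo $m$, set $w_i=\alpha_i\alpha_{i+1}\cdots\alpha_{i-1}$ for the $m$-cycle based at vertex $i$, and let $p_i^{(\ell)}$ denote the unique path of length $\ell$ in $Q_m$ starting at vertex $i$. For \ref{it:QP:hyper} I would first check the hyperpotential axioms: (i) holds because $\rho_{\alpha_i}=p_{i+1}^{(me-1)}$ runs from $i+1$ to $i$ as required, and (ii) follows from the computation $[\alpha_i,\rho_{\alpha_i}]=\alpha_i\rho_{\alpha_i}-\rho_{\alpha_i}\alpha_i=w_i^e-w_{i+1}^e$, so that $\sum_{i=1}^m[\alpha_i,\rho_{\alpha_i}]$ telescopes to $0$. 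Since $\rho_{\alpha_1},\dots,\rho_{\alpha_m}$ are precisely one path of length $me-1$ starting at each vertex, the closed ideal they generate in $\widehat{KQ_m}$ is the ideal of all paths of length $\ge me-1$, and the resulting Jacobian algebra is $\gL_{m,e}$; being finite-dimensional it is $2$-CY-tilted by the Corollary above.

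For \ref{it:QP:W} and \ref{it:QP:Wme} I would use that every nontrivial cyclic path in $Q_m$ is cyclically equivalent to a power of $c=\alpha_1\alpha_2\cdots\alpha_m$, so a potential has the form $W=\sum_{k\ge1}\lambda_k c^k$ with $\lambda_k\in K$, and compute $\partial_{\alpha_j}W=\sum_{k\ge1}k\lambda_k\,p_{j+1}^{(km-1)}$. If $k\lambda_k=0$ in $K$ for every $k$ then all cyclic derivatives vanish and the Jacobian algebra is $\widehat{KQ_m}$. Otherwise, with $d$ the smallest $k$ for which $k\lambda_k\ne0$, the identity $p_{j+1}^{(km-1)}=p_{j+1}^{(dm-1)}w_j^{\,k-d}$ lets me factor $\partial_{\alpha_j}W=p_{j+1}^{(dm-1)}u_j$ where $u_j=d\lambda_d e_j+(\text{paths of positive length})$ is a unit in the corner algebra $e_j\widehat{KQ_m}e_j\cong K[[w_j]]$; hence the closed Jacobian ideal is the one generated by $p_1^{(dm-1)},\dots,p_m^{(dm-1)}$, that is, the ideal of all paths of length $\ge dm-1$, and the Jacobian algebra is $\gL_{m,d}$. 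As $d\lambda_d\ne0$ forces $d\ne0$ in $K$, one gets $\ch K\nmid d$, which is \ref{it:QP:W}; and \ref{it:QP:Wme} is the special case $\lambda_e=1$, all other $\lambda_k=0$, where $\ch K\nmid e$ makes $d=e$ and the Jacobian algebra $\gL_{m,e}$.

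For \ref{it:QP:none} I would argue by contradiction: if $\gL_{m,e}$ were the Jacobian algebra of a quiver with potential $(Q,W)$, then by the splitting theorem of \cite{DWZ08} I may replace $(Q,W)$ by its reduced part without changing the Jacobian algebra, and for a reduced quiver with potential the underlying quiver is the Gabriel quiver of the Jacobian algebra (the Jacobian relations lying in the square of the arrow ideal). Here the hypothesis $me\ge3$ is used: it guarantees that the defining ideal of $\gL_{m,e}$, generated by the paths of length $me-1$, is admissible in $KQ_m$, so that the Gabriel quiver of $\gL_{m,e}$ is exactly $Q_m$. Then \ref{it:QP:W} applies and shows $\gL_{m,e}$ is isomorphic to $\widehat{KQ_m}$ (impossible, since $\gL_{m,e}$ is finite-dimensional) or to $\gL_{m,d}$ with $\ch K\nmid d$; comparing dimensions, $\dim_K\gL_{m,d}=m(md-1)$, forces $d=e$ and hence $\ch K\nmid e$, contradicting the hypothesis. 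The cyclic-derivative computation and the telescoping are routine; the step I expect to be the main obstacle is this normalization at the start of \ref{it:QP:none} --- getting from an arbitrary quiver with potential realizing $\gL_{m,e}$ down to a potential on $Q_m$, so that the classification in \ref{it:QP:W} can be invoked.
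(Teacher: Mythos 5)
Your proposal is correct and follows essentially the same route as the paper: verify the commutator identity for part \eqref{it:QP:hyper}, reduce an arbitrary potential on $Q_m$ to a power series in the basic $m$-cycle and compute its cyclic derivatives for parts \eqref{it:QP:W} and \eqref{it:QP:Wme}, and invoke the Splitting Theorem to reduce part \eqref{it:QP:none} to part \eqref{it:QP:W}. The only differences are cosmetic: where you factor the cyclic derivative as a path of length $dm-1$ times a unit of the corner algebra $e_j\widehat{KQ_m}e_j$, the paper instead iteratively substitutes the relation to show the path of length $dm-1$ lies in arbitrarily high powers of the arrow ideal (your version is arguably cleaner), and your Gabriel-quiver/dimension-count justification in part \eqref{it:QP:none} spells out what the paper compresses into ``hence the quiver equals $Q_m$.''
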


\begin{figure}
\[
\xymatrix@=1pc{
&& {\bullet} \ar[drr]^{\alpha_1} \\
{\bullet} \ar[urr]^{\alpha_m} && && {\bullet} \ar[dd]^{\alpha_2} \\ \\
{} \ar[uu]^{\alpha_{m-1}} \ar@{.}@/_1pc/[rrrr] && && {}
}
\]
\caption{The quiver $Q_m$ which is a cycle on $m$ vertices.}
\label{fig:Qm}
\end{figure}
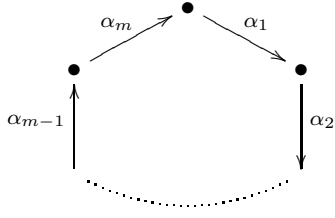

\subsection{Orbit categories}

Another approach to the construction of 2-CY triangulated categories
is via the machinery of triangulated orbit categories developed
by Keller~\cite{Keller05}. Based on this approach, the next statement
provides a construction, which is independent on the characteristic of
$K$, of an ambient 2-CY category for the algebras $\gL_{m,e}$.
Denote by $D_n$ an orientation of the Dynkin diagram of type $D$ with $n$
vertices (where for $n=3$ we use the convention that $D_3=A_3$).

\begin{prop} \label{p:cat}
Let $m,e$ be as in Proposition~\ref{p:QP} and
assume in addition that $m$ is even or that $e$ is odd.
\begin{enumerate}
\renewcommand{\theenumi}{\alph{enumi}}
\item
There is an auto-equivalence $F$ of the bounded derived
category $\cD^b(\modf KD_{me})$ such that the orbit category
\[
\cC_{m,e} = \cD^b(\modf KD_{me})/F
\]
is a 2-CY triangulated category with a cluster-tilting object whose
endomorphism algebra is $\gL_{m,e}$.

\item
The shape of the AR-quiver of
$\cC_{m,e}$ is $\bZ D_{me}/\langle (\phi \tau)^m \rangle$ where
$\phi$ is the automorphism of order 2 of the Dynkin diagram
underlying $D_{me}$.
\end{enumerate}
\end{prop}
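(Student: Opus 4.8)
The plan is to realise $\cC_{m,e}$ as a triangulated orbit category in the sense of Keller~\cite{Keller05}. Write $\tau$ for the Auslander--Reiten translation of $\cD^b(\modf KD_{me})$, $[1]$ for the shift, and $\nu=\tau[1]$ for the Serre functor, and realise $\phi$ as the auto-equivalence induced by the order-$2$ automorphism of a $\phi$-invariant orientation of $D_{me}$ (it commutes with $\tau$ and $\phi^2\cong\id$). Recall that on $\cD^b(\modf KD_n)$ one has $\tau^{2n-2}\cong[-2]$ and $[1]\cong\phi_0\tau^{1-n}$, where $\phi_0=\phi$ if $n$ is odd and $\phi_0=\id$ if $n$ is even; consequently $\nu^{-1}[2]\cong\phi_0\tau^{-n}$. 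I take $F=(\phi\tau^{-1})^m=\phi^m\tau^{-m}$. Then $F^e=\phi^{me}\tau^{-me}\cong\phi_0\tau^{-me}\cong\nu^{-1}[2]=\tau^{-1}[1]$ (using $\phi^{me}\cong\phi_0$), and the automorphism of the translation quiver $\bZ D_{me}$ induced by $F$ is $\phi^m\tau^{-m}=\bigl((\phi\tau)^m\bigr)^{-1}$, which generates $\langle(\phi\tau)^m\rangle$.

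Granting this choice of $F$, I would argue as follows. Since $F^e\cong\tau^{-1}[1]$ and $e$ is the least positive integer with $F^e\in\langle\tau^{-1}[1]\rangle$, the $F$-orbits refine those of $\tau^{-1}[1]$; in particular each has a representative in the window $\modf KD_{me}\vee(\modf KD_{me})[1]$ and meets $\modf KD_{me}$ only finitely often, so Keller's criterion applies and $\cC_{m,e}=\cD^b(\modf KD_{me})/F$ is a Hom-finite algebraic triangulated category. Its Serre functor is induced by $\nu$; since $F\cong\id$ on $\cC_{m,e}$ and $F^e\cong\nu^{-1}[2]$, one gets $\nu\cong[2]$ on $\cC_{m,e}$, so $\cC_{m,e}$ is $2$-CY. (Equivalently, as $\langle F\rangle\supset\langle F^e\rangle=\langle\tau^{-1}[1]\rangle$, the category $\cC_{m,e}$ is the orbit of the cluster category of $D_{me}$ under the cyclic group of order $e$ generated by the image of $F$.) This already yields part (b): the AR-quiver of the orbit category is $\bZ D_{me}$ modulo the automorphism induced by $F$, that is, $\bZ D_{me}/\langle(\phi\tau)^m\rangle$; here one reads $D_3=A_3$, and for $me=4$ the symmetry $\phi$ is taken of order $2$.

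It remains to produce the cluster-tilting object. By Proposition~\ref{p:QP}(\ref{it:QP:Wme}) applied with $e=1$ (so that $\ch K$ plays no role), $\gL_{me,1}$ is the Jacobian algebra of $(Q_{me},W_{me,1})$, and this quiver with potential is mutation-equivalent to an acyclic orientation of $D_{me}$; hence the cluster category of $D_{me}$ carries a cluster-tilting object $T'=T'_1\oplus\dots\oplus T'_{me}$ with $\End(T')\cong\gL_{me,1}$. The parity hypothesis guarantees that the image of $F$ in this category fixes $T'$, acting on its summands by the rotation $T'_i\mapsto T'_{i+m}$ of the $me$-cycle rather than by an orientation-reversing symmetry, so $T'$ descends along the projection $\pi\colon\cD^b(\modf KD_{me})\to\cC_{m,e}$. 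Concretely, set $T=\pi(T'_1)\oplus\dots\oplus\pi(T'_m)$; then $\pi(T'_i)=\pi(T'_{i+m})$, so $T$ has $m$ pairwise non-isomorphic indecomposable summands and $\pi(T')\cong T^{\oplus e}$. I would then check that $T$ is rigid via $\Hom_{\cC_{m,e}}(T,T[1])=\bigoplus_{k\in\bZ}\Hom_{\cD^b(\modf KD_{me})}(T,F^kT[1])=0$; that it is in fact cluster-tilting, by descending the property from $T'$ or via the maximality criterion of Iyama and Yoshino or by verifying $\operatorname{add}T=\{X:\Hom_{\cC_{m,e}}(T,X[1])=0\}$; and finally that $\End_{\cC_{m,e}}(T)=\bigoplus_{k\in\bZ}\Hom_{\cD^b(\modf KD_{me})}(T'_i,F^kT'_j)$ is the fold of $\gL_{me,1}$ under rotation by $m$, i.e.\ summing the bimodule $e_i\gL_{me,1}e_j$ over the $F$-orbit $\{j,j+m,\dots,j+(e-1)m\}$ of $j$ reconstructs $e_i\gL_{m,e}e_j$ path by path, so $\End_{\cC_{m,e}}(T)\cong\gL_{m,e}=KQ_m/(\text{paths of length }me-1)$.

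The main obstacle is this last identification $\End_{\cC_{m,e}}(T)\cong\gL_{m,e}$. It requires careful bookkeeping of the morphism spaces between indecomposables of $\cD^b(\modf KD_{me})$ (type $D$, not type $A$, combinatorics) summed over an $F$-orbit, together with a check that composition is compatible with the fold and that the summands $F^kT'_j$ with $k\not\equiv0\pmod e$ contribute no unexpected morphisms; and it is exactly here, through the fact that the image of $F$ fixes the cyclic seed $T'$ by a rotation, that the hypothesis ``$m$ even or $e$ odd'' is used. A secondary, more technical point is to upgrade $F^e\cong\tau^{-1}[1]$ from an identity of automorphisms of $\bZ D_{me}$ to an isomorphism of functors, which calls for a little care in the triality-prone case $me=4$.
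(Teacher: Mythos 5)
Your choice $F=\phi^m\tau^{-m}$ is, as a functor, the inverse of the paper's $F=S^{1-m(e-1)}\Sigma^{m(e-1)-2}$: under the hypothesis ``$m$ even or $e$ odd'' one has either $\phi^m\cong\id$ or $\phi^m\cong\Sigma\tau^{me-1}$, so your $F$ is again a word in $S$ and $\Sigma$ and the orbit category is the same. The overall strategy --- Keller's theorem for the triangulated structure, the induced automorphism of $\bZ D_{me}$ for part (b), and a combinatorially described cluster-tilting object --- matches the paper's. Your construction of the cluster-tilting object (folding a cyclic cluster-tilting object of the cluster category of $D_{me}$ along the residual $\bZ/e$-action) differs in presentation from the paper's (which takes the $(\phi\tau)$-orbit of a vertex $v$ with $\phi(v)\neq v$ directly in $\bZ D_{me}/\langle(\phi\tau)^m\rangle$ and checks mesh relations), but both leave comparable combinatorial bookkeeping to the reader, so I will not press on that.

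The genuine gap is the Calabi--Yau step. From $F^e\cong\nu^{-1}\Sigma^2$ in $\cD^b(\modf KD_{me})$ and $\bar F\cong\id$ in $\cC_{m,e}$ you conclude $\bar{S}\cong\bSigma^2$. This deduction is not formal: if $G_1\cong G_2$ are auto-equivalences of $\cD^b(\modf KD_{me})$ commuting with $F$, the induced functors on the orbit category depend on the chosen commutation isomorphisms with $F$, and $\bar G_1$, $\bar G_2$ may differ by a nontrivial twist (a sign) even though $G_1\cong G_2$. This is precisely the ``delicate sign issue'' that the paper handles by routing the argument through Proposition~\ref{p:CY} and Theorem~9.5 of Dugas~\cite{Dugas12}, and it is the \emph{only} place where the hypothesis ``$m$ even or $e$ odd'' is genuinely used. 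Your argument never invokes that hypothesis for the CY property --- with your $F$ the relation $F^e\cong\nu^{-1}\Sigma^2$ holds for all $m,e$ --- so as written it would ``prove'' that $\cC_{m,e}$ is 2-CY unconditionally, whereas without the parity assumption one only obtains that it is $(4,2)$-CY. Relatedly, you locate the use of the parity hypothesis in the descent of $T'$ and the identification of $\End(T)$; but with your $F$ the induced automorphism of $\bZ D_{me}$ is $((\phi\tau)^m)^{-1}$ for every $m,e$, so the combinatorial part goes through regardless: the hypothesis lives entirely in the sign analysis. To close the gap, either verify the conditions of Proposition~\ref{p:CY} for your $(d_2,e_2)$ (nonvanishing of $e_1d_2-e_2d_1$ and the stated parities) or reproduce Dugas's compatibility argument for the descent of the isomorphism $S^{e_2}\cong\Sigma^{d_2}$ to the orbit category.
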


The proof of Proposition~\ref{p:cat} relies on the next observation
dealing more generally with
orbit categories of fractionally Calabi-Yau categories,
whose proof uses a result of Keller~\cite{Keller05} on
triangulated orbit categories together with calculations of
Calabi-Yau dimensions by Dugas~\cite[\S9]{Dugas12}.
Recall that a triangulated $K$-linear category $\cC$ with suspension
functor $\Sigma$ and a Serre functor $S$ is \emph{fractionally Calabi-Yau
of dimension $(d,e)$} for some $(d,e) \in \bZ^2 \setminus \{(0,0)\}$
(or $(d,e)$-CY for short) if $S^e \simeq \Sigma^d$.
Observe that the set of pairs $(d,e) \in \bZ^2$ satisfying
$S^e \simeq \Sigma^d$ forms a lattice.

\begin{prop} \label{p:CY}
Let $\cH$ be a hereditary $K$-linear category such that its derived
category $\dcH$ is equivalent to $\cD^b(\modf A)$ for some
finite-dimensional $K$-algebra $A$. Assume that $\dcH$ is
$(d_1,e_1)$-CY for some $(d_1,e_1) \in \bZ^2 \setminus \{(0,0)\}$.

Consider the orbit category $\cC = \dcH / F$ where 
$F=S^{e_2} \Sigma^{-d_2}$ for some $(d_2,e_2) \in \bZ^2$.
Then the following conditions are equivalent:
\begin{enumerate}
\renewcommand{\theenumi}{\roman{enumi}}
\item
$e_1 d_2 - e_2 d_1 \neq 0$.

\item
$\rank L = 2$, where $L$ is the lattice
$L = \bZ(d_1,e_1) + \bZ(d_2,e_2) \subseteq \bZ^2$.

\item
The category $\cC$ is $\Hom$-finite.
\end{enumerate}
Moreover, if any of these conditions holds then the orbit category $\cC$
is triangulated. If $d_2$ is even or $e_2$ is odd
and in addition $d_1-d_2$ is even or $e_1-e_2$ is odd, 
then $\cC$ is $(d,e)$-CY for any $(d,e) \in L$.
In particular, if $(d,1) \in L$ then $\cC$ is $d$-Calabi-Yau.
\end{prop}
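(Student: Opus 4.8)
The plan is to settle the equivalence of (i), (ii), (iii) by elementary arguments, and then to read off the triangulated structure and the Calabi--Yau property from the results of Keller and Dugas quoted before the statement. The equivalence (i)$\Leftrightarrow$(ii) is linear algebra: $L$ has rank $2$ exactly when $(d_1,e_1)$ and $(d_2,e_2)$ are $\bQ$-linearly independent, i.e.\ when the determinant $d_1e_2-e_1d_2$ of the integer matrix with rows $(d_1,e_1),(d_2,e_2)$ is nonzero, and this determinant equals $-(e_1d_2-e_2d_1)$. Throughout I would use the identity $F^{e_1}\simeq(S^{e_1})^{e_2}\Sigma^{-e_1d_2}\simeq\Sigma^{d_1e_2-e_1d_2}=:\Sigma^{\Delta}$, which follows from $S^{e_1}\simeq\Sigma^{d_1}$ and the fact that $S$ commutes with $\Sigma$; note that $\Delta\neq0$ is precisely condition (i). Here $e_1\neq0$: otherwise $\Sigma^{d_1}\simeq\id$ with $d_1\neq0$, impossible since $\dcH\simeq\cD^b(\modf A)$ with $A$ a nonzero finite-dimensional algebra is not $\Sigma$-periodic; so I may assume $e_1\geq1$.

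Next I would prove (i)$\Leftrightarrow$(iii). Since $\cH$ is hereditary, every indecomposable of $\dcH$ is a stalk complex $\Sigma^{-j}M$ with $M$ indecomposable in $\cH$, and $\Hom_{\dcH}(\Sigma^{-i}M,\Sigma^{-j}N)=\operatorname{Ext}^{\,i-j}_{\cH}(M,N)$ vanishes unless $i-j\in\{0,1\}$. Assume $\Delta\neq0$ and fix indecomposables $X,Y$. Writing $n=e_1q+s$ with $0\leq s<e_1$, we have $F^nY\simeq\Sigma^{\Delta q}F^sY$; the finitely many stalk complexes $F^0Y,\dots,F^{e_1-1}Y$ sit in a bounded range of cohomological degrees, so $F^nY$ is a stalk complex whose degree is $\Delta q$ up to a bounded additive error and hence lies in any fixed finite interval for only finitely many $n$. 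By the vanishing above, $\Hom_{\cC}(X,Y)=\bigoplus_{n\in\bZ}\Hom_{\dcH}(X,F^nY)$ has only finitely many nonzero summands, each finite-dimensional; so $\cC$ is $\Hom$-finite. Conversely, if $\Delta=0$ then $F^{e_1}\simeq\id$, so for every $n\in e_1\bZ$ the summand $\Hom_{\dcH}(X,F^nY)$ with $Y=X$ equals $\Hom_{\dcH}(X,X)\ni\id_X$, whence $\Hom_{\cC}(X,X)$ is infinite-dimensional and $\cC$ is not $\Hom$-finite. This establishes the equivalence of (i), (ii), (iii).

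Assuming (i)--(iii), the triangulated structure on $\cC$ comes from Keller's theorem on triangulated orbit categories~\cite{Keller05}. Since $S$ and $\Sigma$ are standard autoequivalences of $\dcH\simeq\cD^b(\modf A)$, so is $F$, and the finiteness hypotheses of that theorem---that each $F$-orbit meets the heart $\cH$ only finitely often, and that for some fixed $N$ every $F$-orbit meets $\cH\vee\Sigma\cH\vee\dots\vee\Sigma^N\cH$---follow from $F^{e_1}\simeq\Sigma^{\Delta}$ with $\Delta\neq0$ together with the degree estimates of the previous paragraph: the first point is exactly the finiteness just established, and for the second one notes that the representatives $F^{e_1q}X$ move in degree by $\Delta$ as $q$ varies, so every orbit meets each window of $|\Delta|+1$ consecutive shifts of $\cH$. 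Hence $\cC$ is triangulated, with suspension induced by $\Sigma$ and with $\dcH\to\cC$ a triangle functor.

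For the Calabi--Yau property I would appeal to the computations of Dugas~\cite[\S9]{Dugas12}. Because $F$ commutes with $S$, both the Serre functor and the suspension of $\dcH$ descend to autoequivalences $S_{\cC},\Sigma_{\cC}$ of the $\Hom$-finite triangulated category $\cC$; the parity hypotheses on $(d_2,e_2)$ and on $(d_1-d_2,e_1-e_2)$ are precisely what guarantees that $S_{\cC}$ is genuinely a Serre functor for $\cC$ and that the relevant isomorphisms descend without an intervening sign. Granting this, pushing down $S^{e_1}\simeq\Sigma^{d_1}$ gives $S_{\cC}^{e_1}\simeq\Sigma_{\cC}^{d_1}$, and pushing down the identification of $F=S^{e_2}\Sigma^{-d_2}$ with the identity in the orbit category gives $S_{\cC}^{e_2}\simeq\Sigma_{\cC}^{d_2}$; so $\cC$ is $(d_1,e_1)$-CY and $(d_2,e_2)$-CY, and since the pairs $(d,e)$ with $S_{\cC}^e\simeq\Sigma_{\cC}^d$ form a sublattice of $\bZ^2$, that sublattice contains $L$, whence $\cC$ is $(d,e)$-CY for every $(d,e)\in L$. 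In particular $(d,1)\in L$ gives $S_{\cC}\simeq\Sigma_{\cC}^d$, i.e.\ $\cC$ is $d$-Calabi--Yau. The step I expect to be the main obstacle is this last one: pinning down exactly which triangle-functor compatibilities are at stake when $\Sigma$, $S$ and $F$ are transported to the orbit category, and checking by means of~\cite{Dugas12} that the stated parities are what make the ensuing sign obstruction vanish---everything else being either elementary or a matter of matching the estimate $F^{e_1}\simeq\Sigma^{\Delta}$ to the hypotheses of~\cite{Keller05}.
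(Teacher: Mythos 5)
Your handling of the equivalence of (i), (ii), (iii) and of the triangulated structure is essentially the paper's argument: both rest on the identity $F^{e_1}\simeq\Sigma^{D}$ with $D=d_1e_2-d_2e_1$, the boundedness of $\Hom$ for stalk complexes in a hereditary derived category, and the verification of conditions (2) and (3) of Keller's theorem (the paper checks (3) by writing $n=Dq+t$ with $0\le t<|D|$ and noting $F^{-e_1q}(\Sigma^nU)\simeq\Sigma^tU$, which is the same window argument you sketch). Your aside that $e_1\neq 0$ is a correct and worthwhile addition.

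The gap is in the Calabi--Yau part, exactly where you flag the ``main obstacle''. You propose to obtain the two relations $\bar{S}^{e_2}\simeq\bSigma^{d_2}$ and $\bar{S}^{e_1}\simeq\bSigma^{d_1}$, the second by ``pushing down'' $S^{e_1}\simeq\Sigma^{d_1}$ from $\dcH$ to $\cC$. But Dugas's Theorem~9.5 is not a descent statement for arbitrary relations between $S$ and $\Sigma$; it produces one relation in the orbit category per presentation of the \emph{killed} functor $F$ in the form $S^{a}\Sigma^{-b}$, and the parity condition it requires is $b(a-b)$ even for that particular presentation. This is why the proposition's second parity hypothesis involves $(d_1-d_2,e_1-e_2)$ and not $(d_1,e_1)$: the paper applies the theorem twice, first to $F=S^{e_2}\Sigma^{-d_2}$ (needing $d_2(e_2-d_2)$ even, i.e.\ $d_2$ even or $e_2$ odd) to get $\bar{S}^{e_2}\simeq\bSigma^{d_2}$, and then to the rewritten presentation $F\simeq S^{e_2-e_1}\Sigma^{d_1-d_2}$ of the \emph{same} functor (needing $(d_1-d_2)(d_1-d_2+e_2-e_1)$ even) to get $\bar{S}^{e_2-e_1}\simeq\bSigma^{d_2-d_1}$; the pairs $(d_2,e_2)$ and $(d_2-d_1,e_2-e_1)$ generate $L$, which finishes the proof. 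Your route leaves the second parity hypothesis unused in any concrete way and asserts without justification that $S^{e_1}\simeq\Sigma^{d_1}$ descends sign-free; as the paper's Remark~1 indicates, without the correct parity input one only obtains the relations for $2L$, so the sign issue you wave at is genuinely the crux and your argument does not resolve it.
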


\begin{cor*}
Under the assumptions of the proposition, for any rational number
$r \in \bQ$ there is a fraction $(d,e)$ such that the orbit
category $\cC$ is $(d,e)$-CY with $d/e=r$.
\end{cor*}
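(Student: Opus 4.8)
The plan is to reduce the assertion to an elementary fact about sublattices of $\bZ^2$. We are in the situation of Proposition~\ref{p:CY} in which the equivalent conditions~(i)--(iii) hold, so that the lattice $L = \bZ(d_1,e_1) + \bZ(d_2,e_2)$ has rank $2$ and, under the stated parity hypotheses, the orbit category $\cC$ is $(d,e)$-CY for \emph{every} $(d,e) \in L$. It therefore suffices to show that every rational number $r$ can be written as $r = d/e$ for some $(d,e) \in L$ with $e \neq 0$; such a pair $(d,e)$ is then the fraction we want.

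To produce it, recall that a rank-$2$ sublattice $L \subseteq \bZ^2$ has finite index $N = [\bZ^2 : L]$, and that by Lagrange's theorem $N$ annihilates the finite group $\bZ^2/L$, whence $N\bZ^2 \subseteq L$. Given $r \in \bQ$, write $r = p/q$ with $p \in \bZ$, $q \in \bZ_{>0}$ and $\gcd(p,q)=1$ (taking $(p,q)=(0,1)$ if $r=0$) and set $(d,e) := (Np, Nq) = N(p,q)$. Then $(d,e) \in L$, $e = Nq \neq 0$ and $d/e = p/q = r$, so by the final assertion of Proposition~\ref{p:CY} the category $\cC$ is $(d,e)$-CY, as required. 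The sign of $e$ is irrelevant here: $S^e \simeq \Sigma^d$ is equivalent to $S^{-e} \simeq \Sigma^{-d}$, so one may also normalise $e>0$ if desired, and one should \emph{not} cancel the common factor $N$, since $(p,q)$ itself need not lie in $L$.

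The argument is robust in an obvious way: if in some instance the parity conditions of Proposition~\ref{p:CY} fail and one can only guarantee that $\cC$ is $(d,e)$-CY for $(d,e)$ ranging over some finite-index sublattice $M \subseteq \bZ^2$, the same computation applies verbatim with $M$ in place of $L$, since any finite-index sublattice of $\bZ^2$ still contains an integer multiple of every primitive vector. Thus the only point that genuinely requires attention --- the ``main obstacle'', such as it is --- is to make sure the hypotheses of Proposition~\ref{p:CY} that deliver the CY property of $\cC$ are in force: that $L$ has rank $2$ (equivalently, $\cC$ is $\Hom$-finite and triangulated) and that the relevant parity conditions on $(d_1,e_1)$ and $(d_2,e_2)$ hold (or that one passes to a suitable finite-index sublattice). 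Granting this, the rest is pure lattice arithmetic.
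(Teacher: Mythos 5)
Your proof is correct and takes essentially the same approach as the paper: both arguments use the rank-$2$ condition to produce a nonzero integer multiple of a vector of slope $r$ inside $L$, you via the finite index $N=[\bZ^2:L]$ and Lagrange's theorem, the paper via $L\otimes_{\bZ}\bQ=\bQ^2$ and clearing denominators of the rational coordinates of $(r,1)$. The two mechanisms are interchangeable one-line lattice facts, and your remarks on signs and on passing to the sublattice $2L$ when the parity hypotheses fail match the paper's first remark.
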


\subsection{Remarks}
We make a few remarks on these observations.

\begin{remark}
The assumptions on the parity of $d_1$, $d_2$, $e_1$ and $e_2$
in Proposition~\ref{p:CY}
are needed in order to apply the results of Dugas~\cite{Dugas12}, who
observed that otherwise there may be delicate sign issues.
They could be dropped at the expense of replacing the
lattice $L$ by its sublattice $2L \subseteq L$ of index 4.
\end{remark}

\begin{remark}
By taking $\cH$ to be the category of representations of a Dynkin quiver,
Proposition~\ref{p:CY} and its corollary apply in particular to the
cluster categories~\cite{BMRRT06}, higher cluster categories~\cite{Thomas07}
and repetitive higher cluster categories~\cite{Lamberti14}
associated with Dynkin quivers. Moreover, by Amiot's
classification~\cite{Amiot07} of triangulated categories with finitely many
indecomposables, it applies also to many stable categories of self-injective
algebras of finite representation type, see for example~\cite{Dugas12}.
Many authors~\cite{Dugas12,ErdmannSkowronksi06,HolmJorgensen13}
have considered only the intersection of the lattice $L$ with the ray
$\{(n,1) \,:\, n \geq 0\}$.

One could take $\cH$ to be any other hereditary 
category whose derived category is fractionally Calabi-Yau. Over an
algebraically closed field, such categories were classified by
van Roosmalen~\cite{vanRoosmalen12}. In particular,
the proposition and its corollary apply also to 
the tubular cluster categories studied by Barot and
Geiss~\cite{BarotGeiss12}
which are special cases of the cluster categories associated
with canonical algebras~\cite{BKL10}. Here, $\cH$ is the category of
sheaves over a weighted projective line in the sense of Geigle and
Lenzing~\cite{GeigleLenzing87}.
\end{remark}

\begin{example*}
Let $\cC$ be the cluster category of tubular type $(2,2,2,2;\lambda)$
for $\lambda \neq 0,1$.
In this case, the hereditary category $\cH$ is $(2,2)$-CY and
$(d_2,e_2)=(2,1)$, hence from $(0,1)=(2,2)-(2,1)$ we see that
$\cC$ is not only 2-CY, but 0-CY as well.
It follows that the endomorphism algebra of any object in $\cC$ is
symmetric, and in particular all the cluster-tilted algebras of tubular
type $(2,2,2,2;\lambda)$, whose description as quivers with potentials
can be found in~\cite[Figure~1]{GKO13}, are symmetric.
\end{example*}

\begin{remark}
By using Amiot's description~\cite{Amiot07} of triangulated categories with
finitely many indecomposables and the classification
in~\cite[Appendix~A]{BIKR08} of the possible shapes of the AR-quivers of
such 2-CY categories with cluster-tilting objects, Bertani-{\O}kland and
Oppermann have classified all representation-finite 2-CY-tilted algebras
arising from standard algebraic 2-CY triangulated categories over an
algebraically closed field~\cite[Theorem~5.7]{BertaniOppermann11}.
Proposition~\ref{p:cat} can be seen as a special case of their classification,
and in the notations of~\cite{BIKR08}, the category $\cC_{m,e}$ corresponds
to type $D_{me}$ with generator $(m,\bar{m}) \in \bZ \times \bZ/2\bZ$.

However, our proofs will not rely on~\cite[Appendix~A]{BIKR08}.
As an illustration of our methods, let us give the following example of
a 2-CY-tilted algebra of finite representation type which seems not to
appear in~\cite{BertaniOppermann11,BIKR08}.
\end{remark}

\begin{example*}[Cluster category of type $G_2$]
The algebra $\gL$ given as the quiver
\[
\xymatrix{
{\bullet} \ar[r] & {\bullet} \ar@(ur,dr)[]^{\beta}
}
\]
with the relation $\beta^3=0$ is of finite representation type and its
AR-quiver is shown in the lecture notes of Gabriel~\cite[Fig.~19]{Gabriel80}.
It has 30 vertices arranged in a cylinder, and by inserting 2 additional
vertices one gets the translation quiver $\bZ E_8/\langle \tau^4 \rangle$.

Indeed, there is an auto-equivalence of $\cD^b(\modf KE_8)$ such that
the corresponding orbit category $\cC_{G_2}$
is 2-CY triangulated with a cluster-tilting
object whose endomorphism algebra is isomorphic to $\gL$.
The AR-quiver of $\cC_{G_2}$ is $\bZ E_8/\langle \tau^4 \rangle$ and
the exchange graph of cluster-tilting objects is an octagon.
The algebra $\gL$ is Jacobian precisely when the characteristic of $K$ is
not 2, and in this case a potential is $\beta^4$
(note that $\beta^3$ is always a hyperpotential).
The category $\cC_{G_2}$ models a cluster algebra of type $G_2$;
we give more details in Section~\ref{sec:G2}.
\end{example*}

\begin{remark}
When $e=1$, the category $\cC_{m,1}$ in Proposition~\ref{p:cat}
is the cluster category, as introduced in~\cite{BMRRT06},
of the Dynkin quiver $D_m$ and it is well known that $\gL_{m,1}$ is a
cluster-tilted algebra of type $D_m$.
In particular, it appears in Ringel's classification of the
self-injective cluster-tilted algebras~\cite{Ringel08}.
Observe that $\gL_{m,1}$ is a Jacobian algebra in any characteristic.
\end{remark}

\begin{remark}
The quiver $Q_m$ belongs to the mutation class of the Dynkin quiver $D_m$
which is acyclic, thus it has a unique non-degenerate potential up to
right equivalence. It follows that the potential $W_{m,e}$ 
in Proposition~\ref{p:QP} is non-degenerate
if and only if $e=1$. Therefore the categories $\cC_{m,e}$ for $e>1$ will not
properly model the corresponding cluster algebra.
\end{remark}

\begin{remark}
Self-injective Jacobian algebras were studied by Herschend and
Iyama~\cite{HerschendIyama11}. However, observe that unless $e=1$,
the quiver with potential $(Q_m,W_{m,e})$ has no cut in their sense,
so that the algebras $\gL_{m,e}$ for $e>1$ do not arise as 3-preprojective
algebras of 2-representation-finite algebras of global dimension~2.
\end{remark}

\begin{remark}
If $\gL$ is a self-injective 2-CY-tilted algebra with $m$ simple modules
arising from an ambient 2-CY category $\cC$, we can also consider its
stable module category $\stmod \gL$ which is triangulated. There are
stabilization functors
\[
\cC \to \modf \gL \to \stmod \gL
\]
where the left functor was considered by Keller and
Reiten~\cite{KellerReiten07}, who also showed that $\stmod \gL$ is
3-CY. At each stage, the AR-quiver of the next category is obtained from that
of the previous one by deleting $m$ vertices (corresponding to indecomposable
summands of a suitable cluster-tilting object in $\cC$, or to the
indecomposable projectives in $\modf \gL$, respectively).
In our case we get a sequence
\[
\cC_{m,e} \to \modf \gL_{m,e} \to \stmod \gL_{m,e}
\]
where $\cC_{m,e}$ has $m^2e$ indecomposables and the AR-quiver of
$\stmod \gL_{m,e}$ has the shape of the translation quiver
$\bZ A_{me-2}/\langle \tau^m \rangle$.

The self-injective algebras of finite representation type whose
stable module categories are higher cluster categories have been
classified by Holm and Jorgensen~\cite{HolmJorgensen13}.
In particular, $\stmod \gL_{m,1}$ is the 3-cluster category of type
$A_{m-2}$.
\end{remark}

\begin{remark}
The algebras $\gL_{m,e}$ are symmetric precisely when $m \leq 2$ (i.e.\
when the quiver is a loop or a 2-cycle).
In this case, they have been shown to be 2-CY-tilted (at least
in characteristic zero) by Burban, Iyama, Keller and Reiten~\cite{BIKR08}.
The ambient 2-CY triangulated categories considered there are the stable
categories of maximal Cohen-Macaulay modules over simple hypersurface
singularities of odd dimension.
\end{remark}

\section{The proofs}

\subsection{Hochschild and cyclic homology for completed path algebras}

In this section we explain why hyperpotentials are the elements of the
first (continuous) Hochschild homology of the completed path algebra.
Let $K$ be a commutative ring.
Let $Q$ be a finite quiver and denote by
$Q_0$ its set of vertices and by $Q_1$ its set of arrows.
Let $A=\wh{KQ}$ be the completed path algebra of $Q$ over $K$.
For any $i \in Q_0$, let $e_i \in A$ be the idempotent corresponding to
the trivial path at $i$, and let $A_+$ be the subspace of (infinite)
linear combinations of non-trivial paths (it is a two-sided ideal in $A$).

The path algebra $KQ$ is a tensor algebra over the commutative ring
$R = \bigoplus_{i \in Q_0} Ke_i$ of the projective $R$-bimodule
$\bigoplus_{\alpha \in Q_1} K\alpha$.
The Hochschild and cyclic homology of tensor algebras were computed by
Loday and Quillen~\cite[\S5]{LodayQuillen84}. Let us recall the result
in our case.

Let $\sigma \colon A \to A$ be the continuous linear map defined by
$\sigma(e_i)=e_i$ for $i \in Q_0$ and by
$\sigma(\alpha_1 \ldots \alpha_n) = \alpha_n \alpha_1 \ldots \alpha_{n-1}$
for a path $\alpha_1 \ldots \alpha_n$.
Obviously, $\sigma$ vanishes on paths that are not cycles.
Then
\begin{align*}
\HH_0(A) &= \coker (\id - \sigma) = A_{\sigma}\\
\HH_1(A) &= \ker ((\id - \sigma)_{\big| A_+}) = A_+^{\sigma}
\end{align*}
and $\HH_n(A)=0$ for $n \geq 2$.
The elements of the space $A_\sigma$ of $\sigma$-coinvariants are infinite
linear combinations of cycles modulo rotation (each cycle can be rotated
independently).
Indeed, if $x=\sum c_n x_n$ is a sum of cycles $x_n$ and $i_n \geq 0$ are
arbitrary,
then $x$ and $\sum c_n \sigma^{i_n} x_n$ differ by $(\id - \sigma)(y)$
for $y = \sum c_n y_n$ with
$y_n = (\id + \sigma + \dots + \sigma^{i_n-1})(x_n)$
(here, $y_n=0$ if $i_n=0$).
Thus, the space of potentials is precisely $(A_+)_{\sigma}$.

The space $A_+^{\sigma}$ of $\sigma$-invariants consists of infinite linear
combinations of non-trivial cycles that are invariant under rotation.
If $\alpha \in Q_1$ and $\rho$ is such that $\alpha \rho$ is a cycle,
then $\sigma(\alpha \rho) = \rho \alpha$. Now,
any linear combination of cycles in $A_+$ can be written in a unique way as
$\sum_{\alpha \in Q_1} \alpha \rho_\alpha$. Since
$\sigma(\sum_{\alpha \in Q_1}
\alpha \rho_\alpha) = \sum_{\alpha \in Q_1} \rho_\alpha \alpha$,
we see that $\sum_{\alpha \in Q_1} \alpha \rho_\alpha \in A_+^{\sigma}$
if and only if $(\rho_\alpha)_{\alpha \in Q_1}$ is a hyperpotential.
In this way we get an identification between hyperpotentials and
elements in $\HH_1(A)$.

From the Connes' exact sequence
\[
\dots \to \HH_n(A) \to \HC_n(A) \to \HC_{n-2}(A) \to \HH_{n-1}(A) \to \dots
\]
we see that $\HC_0(A) \simeq \HH_0(A)$ and
\[
0 \to \HC_2(A) \to \HC_0(A) \xrightarrow{B} \HH_1(A) \to \HC_1(A) \to 0.
\]
The Connes' map $B$ is induced by the norm map $N \colon A \to A$ which is
the continuous linear map defined by
$N(e_i)=0$ for $i \in Q_0$ and by
$N(\alpha_1 \dots \alpha_n) =
\sum_{j=1}^n \alpha_j \alpha_{j+1} \dots \alpha_{j-1}$
for a path $\alpha_1 \dots \alpha_n$.
Comparing this with the definition of the cyclic derivative $\partial_\alpha$
with respect to an arrow $\alpha \in Q_1$,
\[
\partial_\alpha(\alpha_1 \dots \alpha_n) = \sum_{j \,:\, \alpha_j = \alpha}
\alpha_{j+1} \dots \alpha_{j-1}
\]
we see that an element $W \in \HC_0(A)$ is sent by $B$ to
$\sum_{\alpha \in Q_1} \alpha \partial_\alpha W$, and hence any potential $W$
can be regarded as the hyperpotential $(\partial_\alpha W)_{\alpha \in Q_1}$.

Let us write an explicit resolution which is a special case of the
treatment in~\cite[\S6.1]{Keller11}.
For an arrow $\alpha \colon i \to j$, set $s(\alpha)=i$ and $t(\alpha)=j$.
Denote by $\tensor$ the completed tensor product over $K$ and let
$A^e = A^{op} \tensor A$. As an $A^e$-module, $A$ has a projective resolution
\[
0 \to \bigoplus_{\alpha \in Q_1} A e_{s(\alpha)} \tensor e_{t(\alpha)} A \to
\bigoplus_{i \in Q_0} A e_i \tensor e_i A \to A \to 0
\]
where the right map sends an element
$p \otimes q$ to $pq$ and the left map sends an element
$p \otimes q$ in the $\alpha$ component to
$p \alpha \otimes q - p \otimes \alpha q$.
Indeed, this complex is contractible as a complex of right $A$-modules via
the homotopy defined by the continuous maps sending a path $p$ starting at
$i$ to $e_i \otimes p$ and an element
$\alpha_1 \dots \alpha_n \otimes q$ to
$\big(\sum_{j \,:\, \alpha_j=\alpha} \alpha_1 \dots \alpha_{j-1} \otimes
\alpha_{j+1} \dots \alpha_n q \big)_{\alpha \in Q_1}$.

Applying the isomorphism $(A \tensor A) \otimes_{A^e} A \simeq A$ given by
$(a \otimes b) \otimes x \mapsto bxa$, we get the following complex which
computes Hochschild homology
\[
0 \to \bigoplus_{\alpha \in Q_1} e_{t(\alpha)} A e_{s(\alpha)} \to
\bigoplus_{i \in Q_0} e_i A e_i \to 0
\]
where the middle map sends $(x_\alpha)_{\alpha \in Q_1}$ to
$\sum_{\alpha \in Q_1} [x_\alpha,\alpha]$, from which we identify $\HH_1(A)$
with the space of hyperpotentials.

\subsection{Equivalences of hyperpotentials}
\label{sec:equiv}

Let $Q$, $Q'$ be two quivers on the same set of vertices, and
let $A=\wh{KQ}$, $A'=\wh{KQ'}$ be their completed path algebras.
We will denote by $e'_i$ the idempotent in $A'$ corresponding
to the vertex $i \in Q_0$. Consider a continuous homomorphism
of algebras $\vphi \colon A \to A'$ satisfying
$\vphi(e_i)=e'_i$ for all $i \in Q_0$. It induces a map
$\vphi_* \colon \HH_1(A) \to \HH_1(A')$ which we now explicitly
compute in terms of hyperpotentials.

For an arrow $\alpha \in Q_1$, let
$\Delta_\alpha \colon A \to A \tensor A$ be the continuous
(double) derivation taking the values $\Delta_\alpha(e_i)=0$
for $i \in Q_0$ and
\[
\Delta_\alpha(\beta) = \begin{cases}
e_{s(\alpha)} \otimes e_{t(\alpha)} & \text{if $\beta=\alpha$,} \\
0 & \text{otherwise}
\end{cases}
\]
for $\beta \in Q_1$. By induction we get
\[
\Delta_\alpha(\alpha_1 \dots \alpha_n) = 
\sum_{j \,:\, \alpha_j=\alpha} \alpha_1 \dots \alpha_{j-1} \otimes
\alpha_{j+1} \dots \alpha_n
\]
for any path $\alpha_1 \dots \alpha_n$.
The isomorphism $(A \tensor A) \otimes_{A^e} A \simeq A$
is induced by the operation $\diamond$ whose values
on (topological) generators are
$(a \otimes b) \diamond x = bxa$ for $a, b, x \in A$,
and with these notations we have
$\partial_\alpha x = \Delta_\alpha(x) \diamond 1$
for every $x \in A$ and $\alpha \in Q_1$. Observe that always
$\Delta_\alpha(x) \diamond y \in e_{t(\alpha)}Ae_{s(\alpha)}$.

\begin{lemma} \label{l:comm}
Let $x \in A_+$, $y \in A$. Then
\[
\sum_{\alpha \in Q_1} [\alpha, \Delta_\alpha(x) \diamond y] = [x,y]
\]
\end{lemma}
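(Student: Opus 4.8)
The plan is to verify the identity first on generators and then extend it by additivity and the derivation/Leibniz formalism. Concretely, both sides of the claimed equation are (continuous) linear in $x$, so it suffices to check it when $x=\alpha_1\ldots\alpha_n$ is a single path of length $n\geq 1$ (the case of trivial paths is excluded since $x\in A_+$, and in any case gives $0=0$). Using the explicit formula for $\Delta_\alpha$ recorded just above the lemma, the left-hand side becomes
\[
\sum_{\alpha\in Q_1}\ \sum_{j\,:\,\alpha_j=\alpha}
\bigl[\alpha,\ (\alpha_1\ldots\alpha_{j-1}\otimes\alpha_{j+1}\ldots\alpha_n)\diamond y\bigr]
=\sum_{j=1}^n\bigl[\alpha_j,\ \alpha_{j+1}\ldots\alpha_n\, y\,\alpha_1\ldots\alpha_{j-1}\bigr],
\]
where I have used $(a\otimes b)\diamond y = b y a$ and collapsed the double sum over $(\alpha,j)$ into a single sum over $j\in\{1,\ldots,n\}$ (each position $j$ contributes exactly once, with $\alpha=\alpha_j$).

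The next step is to expand each commutator and observe a telescoping cancellation. Writing $c_j = \alpha_{j+1}\ldots\alpha_n\, y\,\alpha_1\ldots\alpha_{j-1}$, the $j$-th term is $\alpha_j c_j - c_j \alpha_j = \alpha_j\alpha_{j+1}\ldots\alpha_n\,y\,\alpha_1\ldots\alpha_{j-1} - \alpha_{j+1}\ldots\alpha_n\,y\,\alpha_1\ldots\alpha_{j-1}\alpha_j$. Denote by $P_j = \alpha_j\alpha_{j+1}\ldots\alpha_n\,y\,\alpha_1\ldots\alpha_{j-1}$ the first summand; then the second summand of the $j$-th term is exactly $P_{j+1}$ for $1\le j\le n-1$, and for $j=n$ it equals $y\,\alpha_1\ldots\alpha_n = P_{n+1}$ if we set $P_{n+1}:=y\,\alpha_1\ldots\alpha_n$. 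Meanwhile $P_1 = \alpha_1\ldots\alpha_n\,y = x y$. Hence the sum telescopes to $P_1 - P_{n+1} = xy - yx = [x,y]$, which is the right-hand side. Care must be taken with the idempotent bookkeeping (the terms live in $e_{t(\alpha)}Ae_{s(\alpha)}$, as remarked before the lemma), but since $x\in e_j A e_i$ forces the source and target of consecutive arrows to match up, all the products above are automatically well-defined and the cyclic identification of source and target of the path $x$ is what makes the telescoping close up.

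An alternative, more conceptual route — which I would mention as a remark rather than carry out — is to note that $x\mapsto \sum_{\alpha\in Q_1}[\alpha,\Delta_\alpha(x)\diamond y]$ is, for fixed $y$, a continuous $K$-linear map $A\to A$, and to check directly that both $x\mapsto \sum_\alpha[\alpha,\Delta_\alpha(x)\diamond y]$ and $x\mapsto[x,y]$ are (inner) derivations of $A$ in the variable $x$, using that each $\Delta_\alpha$ is a double derivation and that $\sum_\alpha[\alpha,\,\cdot\,\diamond y]$ intertwines the two tensor factors appropriately; since two derivations agreeing on the algebra generators (the arrows and idempotents) agree everywhere, and on an arrow $\beta\in Q_1$ the left side is $[\beta, e_{s(\beta)}\,y\,e_{t(\beta)}]$ while one computes $[\beta,y]$ reduces to the same thing after inserting the idempotents forced by $\beta\in e_{t(\beta)}Ae_{s(\beta)}$, the identity follows. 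The main obstacle in either approach is purely notational: keeping the order of the arrows and the placement of $y$ straight through the cyclic rotation, and making sure the idempotents are inserted consistently so that "$xy$" and "$yx$" make sense as written — there is no real analytic or structural difficulty, as everything is a finite sum once $x$ is a single path and continuity handles the passage to infinite linear combinations.
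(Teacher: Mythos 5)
Your proof is correct and follows essentially the same route as the paper: reduce by continuity and linearity to the case where $x$ is a single non-trivial path, expand $\Delta_\alpha(x)\diamond y$ via $(a\otimes b)\diamond y = bya$, and let the resulting sum of commutators telescope to $xy-yx$. The only quibble is your closing remark that the ``cyclic identification of source and target of $x$'' is needed to close the telescoping --- it is not: $P_1=xy$ and $P_{n+1}=yx$ regardless of whether $x$ is a cycle, with mismatched idempotents simply producing zero products on both sides.
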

\begin{proof}
By continuity and linearity we may assume that $x=\alpha_1 \dots \alpha_n$
is a non-trivial path, so that
\begin{align*}
\sum_{\alpha \in Q_1} [\alpha, \Delta_\alpha(x) \diamond y] &=
\sum_{j=1}^n \alpha_j \alpha_{j+1} \dots \alpha_n y \alpha_1 \dots \alpha_{j-1}
-
\sum_{j=1}^n \alpha_{j+1} \dots \alpha_n y \alpha_1 \dots \alpha_{j-1} \alpha_j
\\
&= \alpha_1 \dots \alpha_n y - y \alpha_1 \dots \alpha_n = [x,y] .
\end{align*}
\end{proof}
In particular,
by taking $y=1$ we get the well-known identity $\sum_{\alpha \in Q_1}
[\alpha, \partial_\alpha x] = 0$ which justifies why we can
consider a potential $x$ as a hyperpotential
$(\partial_\alpha x)_{\alpha \in Q_1}$.

Now let $(\rho_\alpha)_{\alpha \in Q_1}$ be a hyperpotential on $Q$ and
let $\vphi \colon A \to A'$ as in the beginning of this section.
We define a hyperpotential $(\rho'_\beta)_{\beta \in Q'_1}$ by
\begin{align} \label{e:phihyper}
\rho'_\beta = \sum_{\alpha \in Q_1} \Delta_\beta(\vphi(\alpha)) \diamond
\vphi(\rho_\alpha) && (\beta \in Q'_1)
\end{align}
This is indeed well-defined, since by Lemma~\ref{l:comm} we have
\begin{align*}
\sum_{\beta \in Q'_1} [\beta, \rho'_\beta] &=
\sum_{\beta \in Q'_1} \Bigl[ \beta,
\sum_{\alpha \in Q_1} \Delta_\beta(\vphi(\alpha)) \diamond \vphi(\rho_\alpha)
\Bigr] =
\sum_{\alpha \in Q_1} \sum_{\beta \in Q'_1} [\beta,
\Delta_\beta (\vphi(\alpha)) \diamond \vphi(\rho_\alpha)] \\
&= \sum_{\alpha \in Q_1} [\vphi(\alpha), \vphi(\rho_\alpha)] =
\vphi \Bigl( \sum_{\alpha \in Q_1} [\alpha, \rho_\alpha] \Bigr) = 0 .
\end{align*}

\begin{lemma} \label{l:phihyper}
We have $\vphi_*((\rho_\alpha)_{\alpha \in Q_1}) = (\rho'_\beta)_{\beta \in Q'_1}$.
\end{lemma}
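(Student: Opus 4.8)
The plan is to unwind both sides through the explicit projective resolution of $A$ as an $A^e$-module recorded just above, so that the abstract map $\vphi_*$ becomes the concrete formula~\eqref{e:phihyper}. Concretely, $\HH_1(A)$ is the homology of the two-term complex
\[
0 \to \bigoplus_{\alpha \in Q_1} e_{t(\alpha)} A e_{s(\alpha)} \xrightarrow{\ d\ }
\bigoplus_{i \in Q_0} e_i A e_i \to 0,
\qquad d\bigl((x_\alpha)_\alpha\bigr) = \sum_{\alpha \in Q_1} [x_\alpha, \alpha],
\]
obtained by applying $(A\tensor A)\otimes_{A^e} A \simeq A$ to the bimodule resolution $P_\bullet \to A$, and a hyperpotential $(\rho_\alpha)$ is precisely an element of $\ker d$. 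To compute $\vphi_*$ I need a chain map $\Phi_\bullet \colon P_\bullet \to P'_\bullet$ lifting $\id \colon A \to A'$ along $\vphi$ (viewing $A'$ as an $A$-bimodule via $\vphi$), then apply $-\otimes_{(A')^e} A'$ and read off the induced map on $H_1$.

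First I would write down $\Phi_\bullet$ explicitly. In degree $0$ the map $\bigoplus_{i} A e_i \tensor e_i A \to \bigoplus_i A'e'_i \tensor e'_i A'$ is just $\vphi\tensor\vphi$ on each summand (this is forced, since both resolve $A$, $A'$ via multiplication and $\vphi(e_i)=e'_i$). In degree $1$ I need, for each $\alpha \in Q_1$, an element $\Phi_1$ sending the generator of $A e_{s(\alpha)}\tensor e_{t(\alpha)}A$ into $\bigoplus_{\beta \in Q'_1} A' e_{s(\beta)}' \tensor e_{t(\beta)}'A'$ so that the square commutes: applying the differential $p\otimes q \mapsto p\beta\otimes q - p\otimes \beta q$ of $P'_\bullet$ and then $\vphi\tensor\vphi$-degree-zero must agree with first mapping via $\Phi_1$... — more precisely the commuting square says $\partial' \circ \Phi_1 = \Phi_0 \circ \partial$. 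The candidate is $\Phi_1$ whose $\beta$-component on the $\alpha$-generator is $\Delta_\beta(\vphi(\alpha))$ (with the two tensor legs placed appropriately), because $\sum_{\beta}[\beta, \Delta_\beta(\vphi(\alpha))\diamond(-)] $ recovers the commutator with $\vphi(\alpha)$ — this is exactly Lemma~\ref{l:comm} in disguise, and indeed it is the bimodule-level identity that Lemma~\ref{l:comm} encodes after applying $\diamond$. So the verification that $\Phi_\bullet$ is a genuine chain map reduces to a version of Lemma~\ref{l:comm} before cyclic contraction, which one proves the same way, by linearity and continuity on a single path $\vphi(\alpha)=\alpha_1'\cdots\alpha_n'$ (a telescoping sum).

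Once $\Phi_\bullet$ is in hand, I apply $-\otimes_{(A')^e}A'$ via $\diamond$. The degree-$1$ term of $P_\bullet \otimes_{A^e} A'$ (with $A'$ an $A$-bimodule through $\vphi$) becomes $\bigoplus_\alpha e_{t(\alpha)}'A'e_{s(\alpha)}'$, and a hyperpotential $(\rho_\alpha)$ on $Q$ maps — via the comparison between this complex and the one for $A$ itself, which is where one uses that $\vphi$ is a bimodule map and that $\rho_\alpha \in e_{t(\alpha)}Ae_{s(\alpha)}$ — to the element $(\vphi(\rho_\alpha))_\alpha$ in that term, and then $\Phi_1 \otimes \id$ sends it to $\bigl(\sum_{\alpha \in Q_1}\Delta_\beta(\vphi(\alpha))\diamond\vphi(\rho_\alpha)\bigr)_{\beta \in Q'_1}$, which is exactly $(\rho'_\beta)$ as defined in~\eqref{e:phihyper}. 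The already-verified identity $\sum_\beta [\beta, \rho'_\beta]=0$ (shown in the excerpt just before the lemma) confirms this lands in $\ker d'$, i.e.\ is a genuine hyperpotential, so it represents $\vphi_*\bigl((\rho_\alpha)\bigr)$ in $\HH_1(A')$ as claimed.

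The main obstacle is bookkeeping rather than conceptual: one must be scrupulous about the placement of the two tensor legs and the direction of the bimodule actions so that the chain-map square actually commutes on the nose (not just up to a sign or a swap), and one must check that passing $A'$ through $\vphi$ as an $A$-bimodule is compatible with the identifications $(A\tensor A)\otimes_{A^e}(-)$ used to pass from the bimodule resolution to the Hochschild complex. Since $\vphi$ is assumed continuous and we work with completed tensor products, I would also note in passing that all the maps involved are continuous, so the reductions ``by linearity and continuity to a single path'' are legitimate; this is routine given the setup in the previous subsection.
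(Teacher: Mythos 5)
Your argument is correct, and it reaches \eqref{e:phihyper} by a genuinely different route from the paper's. The paper stays inside the (completed) bar-type Hochschild complex $A\tensor A\tensor A \to A\tensor A \to A$, where the functoriality of $\vphi_*$ is tautological ($\vphi\otimes\vphi$ on chains), and does all the work in a normal-form lemma: modulo $\img d_2$ every $1$-chain can be rewritten as $\sum_\alpha \rho_\alpha\otimes\alpha$, via the telescoping relation $y\otimes x \sim \sum_\alpha(\Delta_\alpha(x)\diamond y)\otimes\alpha$; applying this to $\sum_\alpha\vphi(\rho_\alpha)\otimes\vphi(\alpha)$ gives $(\rho'_\beta)$ directly. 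You instead lift $\vphi$ to an explicit chain map $\Phi_\bullet$ between the two small bimodule resolutions (with $\Phi_1$ built from $\Delta_\beta(\vphi(\alpha))$) and read off the induced map on $H_1$ after applying $-\otimes_{(A')^e}A'$; your commuting-square verification is the same telescoping identity, just performed at the bimodule level before contraction rather than in the bar complex. The paper's version buys a shorter writeup (no chain map to construct, no $\vphi$-equivariance or compatibility of identifications to check), while yours works directly in the complex where hyperpotentials naturally live and makes the comparison-of-resolutions mechanism explicit; the bookkeeping you flag (placement of tensor legs, the $A$-bimodule structure on $A'$ via $\vphi$) is real but routine and you have identified all of it, so there is no gap.
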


For the proof, we start by considering the Hochschild complex
\[
\dots \to A \tensor A \tensor A \xrightarrow{d_2}
A \tensor A \xrightarrow{d_1} A
\]
with $d_1(a \otimes b) = ab-ba$
and $d_2(a \otimes b \otimes c) = ab \otimes c - a \otimes bc + ca \otimes b$.
Denote by $\sim$ the equivalence relation on $A \tensor A$ defined
by $\img d_2$ (i.e.\
two elements are equivalent if their difference lies in $\img d_2$).

\begin{lemma}
Any element in $A \tensor A$ is equivalent to an element of
the form $\sum_{\alpha \in Q_1} \rho_\alpha \otimes \alpha$ with
$\rho_\alpha \in e_{t(\alpha)} A e_{s(\alpha)}$ for each
$\alpha \in Q_1$.
\end{lemma}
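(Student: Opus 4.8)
The plan is to mimic, in the completed setting, the standard reduction of a Hochschild $1$-chain to a "canonical form" supported on the arrows. The key point is that the ideal $A_+$ is topologically generated by the arrows $\alpha \in Q_1$, so any element $a \otimes b \in A \tensor A$ can be rewritten modulo $\img d_2$ until the right-hand tensor factor is a single arrow. First I would reduce, by continuity and $K$-linearity, to the case of a generator $a \otimes b$ where $a$ is a path starting at some vertex $i$ and $b = \beta_1 \dots \beta_k$ is a path (possibly trivial). If $b = e_i$ is trivial, then $a \otimes e_i = a \otimes e_i$ represents a class coming from $\HH_1$ of the trivial-arrow part; but the lemma allows the $\rho_\alpha$ to be arbitrary, so the cleanest route is to note $d_1(a \otimes e_i) = [a, e_i]$ and to absorb trivial-path contributions separately — or better, to observe that it suffices to handle $a \otimes b$ with $b \in A_+$, since the remaining $A \tensor R$-part maps isomorphically under $\diamond$ and does not actually arise in the expressions we care about. (I will need to be a little careful here, because the statement is about \emph{all} of $A \tensor A$, not just the part in the kernel of $d_1$; I expect the resolution of this bookkeeping to be the most delicate point.)

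The main computational step is the identity, valid for a path $b = \beta_1 \dots \beta_k$ with $k \geq 1$,
\[
a \otimes \beta_1 \dots \beta_k \ \sim\ \sum_{r=1}^{k} \beta_{r+1} \dots \beta_k \, a \, \beta_1 \dots \beta_{r-1} \otimes \beta_r,
\]
which one proves by telescoping: applying $d_2$ to $a\beta_1 \dots \beta_{r-1} \otimes \beta_r \otimes \beta_{r+1}\dots\beta_k$ gives
\[
a\beta_1\dots\beta_r \otimes \beta_{r+1}\dots\beta_k \ -\ a\beta_1\dots\beta_{r-1}\otimes\beta_r\dots\beta_k \ +\ \beta_{r+1}\dots\beta_k\, a\beta_1\dots\beta_{r-1}\otimes\beta_r,
\]
and summing over $r = 1,\dots,k$ the first two families of terms telescope, leaving $a\beta_1\dots\beta_k \otimes e_{?} - a \otimes \beta_1\dots\beta_k$ on one side and $\sum_r \beta_{r+1}\dots\beta_k\, a\beta_1\dots\beta_{r-1}\otimes\beta_r$ on the other. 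Since $a\beta_1\dots\beta_k \otimes e_{?} \sim 0$ after the trivial-path reduction above (or can be folded back in), this yields the displayed congruence. Each summand on the right has its right tensor factor equal to a single arrow $\beta_r \in Q_1$, and one checks directly that $\beta_{r+1}\dots\beta_k\, a\, \beta_1\dots\beta_{r-1} \in e_{t(\beta_r)} A e_{s(\beta_r)}$, because the product closes up into a cycle precisely when $b = \beta_1\dots\beta_k$ is itself a path that can be preceded by $a$ — and when it cannot, the term is zero since $\beta_r a = 0$ in that case.

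Finally I would assemble: given an arbitrary $\xi \in A \tensor A$, write it as a (convergent) $K$-linear combination of generators $a \otimes b$, apply the reduction to each, and collect the coefficients of each arrow $\beta \in Q_1$ to obtain an element $\rho_\beta \in e_{t(\beta)} A e_{s(\beta)}$; convergence of $\sum_\beta \rho_\beta \otimes \beta$ in the completed tensor product follows because the rewriting strictly does not decrease path length, so only finitely many terms contribute in each total degree. I expect the main obstacle to be purely organizational: making the telescoping argument rigorous in the \emph{completed} algebra (so that the infinite sums converge and $\img d_2$ is understood as a closed subspace, or at least that the congruence is legitimate), and cleanly dispatching the trivial-path terms $a \otimes e_i$ — here one uses that these lie in the image of $d_1$ composed appropriately, or simply notes the lemma's conclusion imposes no constraint relating the $\rho_\alpha$, so any leftover can be harmlessly redistributed.
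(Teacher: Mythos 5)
Your main computational step is correct and is essentially the paper's own argument: the telescoping identity you propose is exactly what the paper obtains by peeling off one arrow at a time via $y \otimes \alpha_1 \dots \alpha_n \sim y\alpha_1\dots\alpha_{n-1}\otimes\alpha_n + \alpha_n y\otimes\alpha_1\dots\alpha_{n-1}$, yielding $y\otimes x \sim \sum_{\alpha\in Q_1}(\Delta_\alpha(x)\diamond y)\otimes\alpha$ for $x\in A_+$; the convergence issue is also unproblematic for the reason you indicate (the relations used to reduce tensors of a given total path length stay in that length, and $d_2$ is continuous, so the infinite sums of boundaries are again boundaries).

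The genuine gap is precisely the point you flag and do not close: the terms with trivial second tensor factor. Neither of your proposed escapes works. The map $d_1$ is irrelevant, since $\sim$ is defined by $\img d_2$ and not by $\ker d_1$ or $\img d_1$; and you cannot dismiss the $A\tensor R$ part as ``not arising,'' because the lemma is asserted for \emph{every} element of $A\tensor A$, and such terms arise inside your own telescoping as the leftover summand $a\beta_1\cdots\beta_k\otimes e$, which you assert is $\sim 0$ by appeal to a ``trivial-path reduction above'' that was never actually carried out. The fix is a one-line computation with $d_2$ itself. From $d_2(e_i\otimes e_i\otimes e_j)=e_i\otimes e_j-e_i\otimes e_ie_j+e_je_i\otimes e_i$ one gets $e_i\otimes e_j\sim 0$ for all $i,j\in Q_0$, and from $d_2(1\otimes y\otimes e_i)=y\otimes e_i-1\otimes ye_i+e_i\otimes y$ one gets $y\otimes e_i\sim 1\otimes ye_i-e_i\otimes y$, whose right-hand side has second factor in $A_+$ once the $R$-component of $y$ has been disposed of by the first relation; this reduces the whole lemma to the case $x\in A_+$. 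Applied to your leftover term, where $z=a\beta_1\cdots\beta_k$ is (when nonzero) a cycle at the vertex of $e$, the even quicker relation $d_2(z\otimes e\otimes e)=ze\otimes e-z\otimes e+ez\otimes e=z\otimes e$ shows it is indeed $\sim 0$. With this supplied, your proof closes and coincides with the paper's.
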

\begin{proof}
Consider an element $y \otimes x \in A \tensor A$ for
some $x,y \in A$. From
$e_i \otimes e_j = e_i e_i \otimes e_j \sim
e_i \otimes e_i e_j - e_j e_i \otimes e_i$ we see that
$e_i \otimes e_j \sim 0$ for all $i,j \in Q_0$. Moreover,
$y \otimes e_i = 1 \cdot y \otimes e_i \sim 1 \otimes ye_i - e_i \otimes y$
and hence we may assume that $x \in A_+$.

If $\alpha_1 \dots \alpha_n$ is a path, then
$y \otimes \alpha_1 \dots \alpha_n \sim
y \alpha_1 \dots \alpha_{n-1} \otimes \alpha_n
+ \alpha_n y \otimes \alpha_1 \dots \alpha_{n-1}$,
so by induction
\[
y \otimes \alpha_1 \dots \alpha_n \sim
\sum_{j=1}^n \alpha_{j+1} \dots \alpha_n y \alpha_1 \dots \alpha_{j-1}
\otimes \alpha_j
= \sum_{\alpha \in Q_1} (\Delta_\alpha(\alpha_1 \dots \alpha_n) \diamond y)
\otimes \alpha
\]
and by linearity and continuity
\begin{equation} \label{e:yx}
y \otimes x \sim \sum_{\alpha \in Q_1} (\Delta_\alpha(x) \diamond y) \otimes
\alpha
\end{equation}
for all $x \in A_+$, $y \in A$.
\end{proof}

To complete the proof of Lemma~\ref{l:phihyper}, note that
the hyperpotential $(\rho_\alpha)$ could be seen as the element
$\sum_{\alpha \in Q_1} \rho_\alpha \otimes \alpha \in \ker d_1 \subseteq
A \tensor A$. Applying $\vphi_*$ and using~\eqref{e:yx}, we get
\[
\vphi_* \Bigl(\sum_{\alpha \in Q_1} \rho_\alpha \otimes \alpha \Bigr)
= \sum_{\alpha \in Q_1} \vphi(\rho_\alpha) \otimes \vphi(\alpha)
\sim \sum_{\alpha \in Q_1} \sum_{\beta \in Q'_1}
(\Delta_\beta(\vphi(\alpha)) \diamond \vphi(\rho_\alpha))
\otimes \beta = \sum_{\beta \in Q'_1} \rho'_\beta \otimes \beta .
\]

The notion of right equivalence for potentials was defined in~\cite{DWZ08},
and weak right equivalence was introduced in~\cite{GLS13}. Let us
consider analogous notions for hyperpotentials.

\begin{defn}
Let $Q$ and $Q'$ be two quivers on the same set of vertices.

Two hyperpotentials $(Q, (\rho_\alpha)_{\alpha \in Q_1})$ and
$(Q', (\rho'_\beta)_{\beta \in Q'_1})$ are \emph{right equivalent}
if there exists a continuous isomorphism of algebras
$\vphi \colon \wh{KQ} \to \wh{KQ'}$
with $\vphi(e_i)=e'_i$ for all $i \in Q_0$ such that
$\vphi_*((\rho_\alpha)_{\alpha \in Q_1}) = (\rho'_\beta)_{\beta \in Q'_1}$.

They are \emph{weakly right equivalent} if there exists $c \in K^{\times}$
such that $(Q, (c \rho_\alpha)_{\alpha \in Q_1})$ and
$(Q', (\rho'_\beta)_{\beta \in Q'_1})$ are right equivalent.
\end{defn}

The original definitions of these notions for two quivers with potentials
$(Q,W)$ and $(Q',W')$ could be rephrased in terms of the map
$\vphi_* \colon \HH_0(A) \to \HH_0(A')$, namely
as $\vphi_*(W)=W'$ and $\vphi_*(cW)=W'$, respectively.

\begin{lemma} \label{l:poteq}
Two potentials that are (weakly) right equivalent
are also (weakly) right equivalent as hyperpotentials.
Conversely, if $K$ contains $\bQ$, then two potentials that are
(weakly) right equivalent as hyperpotentials are so also
as potentials.
\end{lemma}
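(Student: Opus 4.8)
The plan is to deduce everything from the fact that Connes' map $B$ is natural in the algebra, combined with the explicit descriptions of $B$ and of $\vphi_*$ recorded above. Recall that for a continuous homomorphism $\vphi\colon A\to A'$ with $\vphi(e_i)=e'_i$, the whole Connes exact sequence is functorial, so the square formed by $B\colon \HC_0(A)\to\HH_1(A)$, $B\colon \HC_0(A')\to\HH_1(A')$ and the two induced maps $\vphi_*$ commutes; under the identifications above this says exactly that $\vphi_*\bigl((\partial_\alpha W)_{\alpha}\bigr)=\bigl(\partial_\beta\vphi_*(W)\bigr)_{\beta}$, which can alternatively be checked by hand from the formula $B(W)=\sum_{\alpha}\alpha\,\partial_\alpha W$ and formula~\eqref{e:phihyper} via Lemma~\ref{l:phihyper}. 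Note also that, since $\vphi$ is continuous with $\vphi(e_i)=e'_i$, it preserves the path-length filtration, so $\vphi(A_+)\subseteq A'_+$ and hence $\vphi_*$ sends potentials (elements of $(A_+)_\sigma$) to potentials.

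For the first assertion, let $\vphi$ realize a right equivalence of the potentials $W$ and $W'$, i.e.\ $\vphi_*(W)=W'$ in $\HH_0(A')$. Applying $B$ and using naturality gives
\[
\vphi_*\bigl((\partial_\alpha W)_{\alpha\in Q_1}\bigr)=\vphi_*(B(W))=B(\vphi_*(W))=B(W')=(\partial_\beta W')_{\beta\in Q'_1},
\]
so the same $\vphi$ is a right equivalence of the associated hyperpotentials. The weak case follows by replacing $W$ with $cW$ and noting that the hyperpotential of $cW$ is $(c\,\partial_\alpha W)_\alpha$.

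For the converse, assume $\bQ\subseteq K$ and that $\vphi$ realizes a right equivalence of the hyperpotentials $(\partial_\alpha W)_\alpha$ and $(\partial_\beta W')_\beta$. Naturality again gives $B(\vphi_*(W))=\vphi_*(B(W))=\vphi_*((\partial_\alpha W)_\alpha)=(\partial_\beta W')_\beta=B(W')$. Since $\vphi_*(W)$ and $W'$ both lie in the subspace $(A'_+)_\sigma$ of potentials, it now suffices to show that $B$ is injective on $(A'_+)_\sigma$ when $\bQ\subseteq K$. The space $(A'_+)_\sigma$ is (topologically) free over $K$ on the set of cyclic words, that is, on the $\sigma$-orbits of non-trivial cyclic paths; if $O$ is such an orbit whose underlying cyclic path has period $m$ and total length $n=mk$, then the norm map sends a representative of $O$ to $k\cdot s_O$ where $s_O=\sum_{q\in O}q$, and the elements $s_O$, having pairwise disjoint supports, span $A'^{\sigma}_+=\HH_1(A')$ freely. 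Thus $B$ is diagonal in these bases with entries the positive integers $k_O$, which are invertible since $\bQ\subseteq K$; hence $B|_{(A'_+)_\sigma}$ is injective (in fact an isomorphism onto $\HH_1(A')$, recovering that every hyperpotential then comes from a potential). Therefore $\vphi_*(W)=W'$, so $W$ and $W'$ are right equivalent as potentials via $\vphi$, and the weak case follows by applying this to $cW$.

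The only delicate point is the injectivity of $B$ on the space of potentials, and this is precisely where the hypothesis $\bQ\subseteq K$ is used: over a field of characteristic $p$ dividing one of the repetition numbers $k_O$ the map $B$ acquires a kernel, which is exactly the mechanism behind the examples in Proposition~\ref{p:QP}. Everything else is a formal consequence of the functoriality of cyclic homology.
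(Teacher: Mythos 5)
Your proof is correct and follows the same overall strategy as the paper's: both parts reduce to the commutation $\vphi_*\circ B=B\circ\vphi_*$ of the induced maps with Connes' map $B\colon\HC_0\to\HH_1$, together with the injectivity of $B$ on the space of potentials $(A'_+)_\sigma$ when $\bQ\subseteq K$. The only real divergence is in how that second ingredient is justified: the paper quotes the Loday--Quillen computation, by which $\ker B=\HC_2(A')$ is spanned by the trivial paths in characteristic zero, whereas you prove the injectivity directly by diagonalizing $B$ over the topological basis of cyclic words, the diagonal entries being the repetition numbers $k_O$. That computation is right (the norm map sends a cycle of length $mk$ with primitive period $m$ to $k$ times the sum of its $m$ distinct rotations, and these sums with pairwise disjoint supports form a topological basis of $(A'_+)^{\sigma}$), and it has the modest advantage of being self-contained and of exhibiting exactly where positive characteristic makes the lemma fail. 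For the first half, your appeal to naturality of the Connes exact sequence is equivalent to the paper's combination of the chain rule of Derksen--Weyman--Zelevinsky with Lemma~\ref{l:phihyper}; as you yourself note, both amount to the same explicit identity $\vphi_*((\partial_\alpha W)_{\alpha})=(\partial_\beta \vphi(W))_{\beta}$.
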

\begin{proof}
Combining the chain rule~\cite[Lemma~3.9]{DWZ08}, Lemma~\ref{l:phihyper}
and Eq.~\eqref{e:phihyper}, we see that
for any $W \in \HH_0(A)$ and
continuous algebra homomorphism $\vphi \colon A \to A'$ such that
$\vphi(e_i)=e'_i$ for all $i \in Q_0$ we have
\[
\vphi_*((\partial_\alpha W)_{\alpha \in Q_1}) =
(\partial_\beta \vphi(W))_{\beta \in Q'_1} ,
\]
hence the first part. For the second part, note that the
kernel of the Connes' map $\HC_0(A') \to \HH_1(A')$
is $\HC_2(A')$, which is spanned by the trivial paths
if $K$ contains~$\bQ$.
\end{proof}

The second part of Lemma~\ref{l:poteq}
is not true in positive characteristic.
\begin{example}
Let $K$ be a field of characteristic $p$ and let $n > p$. 
Consider a quiver with one vertex and one loop $\alpha$
(this example could be extended to cycles of longer length).
Then the potentials $W=\alpha^n$ and $W'=\alpha^n+\alpha^p$ are
not (weakly) right equivalent since any automorphism would map $W$
to a power series in $\alpha$ whose terms have all degree at least
$n$. However, $W$ and $W'$ yield the same hyperpotential, namely,
$n\alpha^{n-1}$, hence as hyperpotentials they are right equivalent.
\end{example}

The next proposition characterizes (weakly) right equivalent
hyperpotentials in terms of
the existence of isomorphisms between their Ginzburg dg-algebras
having certain prescribed values.
It follows that the 3-CY triangulated categories as well as the
generalized cluster categories associated to weakly equivalent
hyperpotentials are equivalent.
For potentials, the ``only if'' direction in part~\eqref{it:G:equiv}
has been shown in~\cite[Lemma~2.9]{KellerYang11}.

\begin{propp} \label{p:equiv}
Let $Q$ and $Q'$ be two quivers on the same set of vertices,
let $(Q,(\rho_\alpha))$ and $(Q',(\rho'_\beta))$ be hyperpotentials
and let $\Gamma$, $\Gamma'$ be the corresponding Ginzburg dg-algebras.
\begin{enumerate}
\renewcommand{\theenumi}{\alph{enumi}}
\item \label{it:G:equiv}
$(Q,(\rho_\alpha))$ and $(Q',(\rho'_\beta))$ are right equivalent if
and only if there exists a continuous isomorphism of dg-algebras
$\Phi \colon \Gamma \to \Gamma'$
such that $\Phi(e_i)=e'_i$ and $\Phi(t_i)=t'_i$ for all $i \in Q_0$.

\item \label{it:G:weak}
$(Q,(\rho_\alpha))$ and $(Q',(\rho'_\beta))$ are weakly right equivalent if
and only if there exist a continuous isomorphism of dg-algebras
$\Phi \colon \Gamma \to \Gamma'$ and an element $c \in K^{\times}$
such that $\Phi(e_i)=e'_i$ and $\Phi(t_i)=c t'_i$ for all $i \in Q_0$.
\end{enumerate}
\end{propp}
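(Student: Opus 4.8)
The plan is to prove both directions of each part by translating the condition ``right equivalence of hyperpotentials'' into an explicit description of how a candidate dg-algebra isomorphism must act on the generators, and vice versa. Recall that $\Gamma$ is the completion of the graded path algebra of $\wt{Q}$, with degree-$0$ arrows $\alpha \in Q_1$, degree-$(-1)$ arrows $\alpha^*$, and degree-$(-2)$ loops $t_i$, and likewise for $\Gamma'$. A continuous graded algebra homomorphism $\Phi\colon\Gamma\to\Gamma'$ fixing the idempotents is determined by its values $\Phi(\alpha)\in e'_{t(\alpha)}\Gamma' e'_{s(\alpha)}$ in degree $0$, $\Phi(\alpha^*)\in e'_{s(\alpha)}\Gamma' e'_{t(\alpha)}$ in degree $-1$, and $\Phi(t_i)\in e'_i\Gamma' e'_i$ in degree $-2$. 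Write $\vphi=\Phi|_{\wh{KQ}}\colon \wh{KQ}\to\wh{KQ'}$ for the restriction to the degree-$0$ part (using that $\Phi$ preserves degree and that the degree-$0$ part of $\Gamma$ is exactly $\wh{KQ}$, similarly for $\Gamma'$); this is a continuous algebra homomorphism fixing idempotents, and it is an isomorphism iff $\Phi$ is (the inverse of an isomorphism preserves the degree filtration).

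For part~\eqref{it:G:equiv}, first I would prove the ``only if'' direction: given a right equivalence $\vphi\colon\wh{KQ}\to\wh{KQ'}$ with $\vphi_*((\rho_\alpha))=(\rho'_\beta)$, I construct $\Phi$. On degree-$0$ generators set $\Phi(\alpha)=\vphi(\alpha)$. The key point is choosing $\Phi(\alpha^*)$ so that $d$ is respected: we need $d'\Phi(\alpha^*)=\Phi(d\alpha^*)=\vphi(\rho_\alpha)$. Comparing with formula~\eqref{e:phihyper}, the natural choice is $\Phi(\alpha^*)=\sum_{\beta\in Q'_1}\Delta_\alpha(\cdot)$-type expression; more precisely, since $d'(\beta^*)=\rho'_\beta$ and $\rho'_\beta=\sum_{\alpha}\Delta_\beta(\vphi(\alpha))\diamond\vphi(\rho_\alpha)$, we invert the relationship using the identity $x=\sum_{\beta}\beta\,(\Delta_\beta(x)\diamond 1)+(\text{idempotent part})$ applied to $x=\vphi(\alpha)$: set $\Phi(\alpha^*)=\sum_{\beta\in Q'_1}(\Delta_\beta(\vphi(\alpha)))^{\mathrm{op}}\diamond\beta^*$ suitably interpreted so that applying $d'$ and the Leibniz rule recovers $\vphi(\rho_\alpha)$. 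Then $\Phi(t_i)=t'_i$ forces a compatibility $d'(t'_i)=e'_i(\sum_\beta[\beta,\beta^*])e'_i$ on one side and $\Phi(e_i(\sum_\alpha[\alpha,\alpha^*])e_i)$ on the other; verifying these agree is exactly the content of Lemma~\ref{l:comm} together with the relation between $(\rho_\alpha)$ and $(\rho'_\beta)$ being that of equivalent hyperpotentials. One must also check $\Phi$ is continuous (clear, as the formulas increase path length) and an isomorphism (because $\vphi$ is, and $\Phi$ is ``triangular'' with respect to the internal degree). For the ``if'' direction, given $\Phi$ with $\Phi(e_i)=e'_i$ and $\Phi(t_i)=t'_i$, restrict to $\vphi=\Phi|_0$; then $d'\Phi=\Phi d$ applied to $t_i$ and to $\alpha^*$ forces, after restricting to the appropriate graded pieces, that $\vphi_*((\rho_\alpha))=(\rho'_\beta)$ as elements of $\HH_1$, using the identification of hyperpotentials with $\HH_1$ and Lemma~\ref{l:phihyper}. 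The main subtlety is bookkeeping: $\Phi(\alpha^*)$ is a priori an arbitrary degree-$(-1)$ element, i.e.\ a sum $\sum_\beta p_\beta \beta^* q_\beta$ of terms linear in the $\beta^*$ (no $t$'s can appear by degree), and one must extract from $d'\Phi(\alpha^*)=\vphi(\rho_\alpha)$ precisely the statement~\eqref{e:phihyper}, modulo the ambiguity that is absorbed by $\img d_2$ in the Hochschild complex.

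For part~\eqref{it:G:weak}, the argument is the same with one twist: a weak right equivalence means $(c\rho_\alpha)$ is right equivalent to $(\rho'_\beta)$ for some $c\in K^\times$. Rescaling $\Phi(\alpha^*)$ by $c$ (so $d'\Phi(\alpha^*)=c\,\vphi(\rho_\alpha)=\vphi'(c\rho_\alpha)$ matched appropriately) forces $\Phi(\sum_\beta[\beta,\beta^*])$ to pick up a factor of $c$, hence $\Phi(t_i)=ct'_i$; conversely $\Phi(t_i)=ct'_i$ together with $d'\Phi=\Phi d$ on $\alpha^*$ and $t_i$ forces $\vphi_*((\rho_\alpha))=(c^{-1}\rho'_\beta)$ or equivalently $\vphi_*((c\rho_\alpha))=(\rho'_\beta)$, which is a weak right equivalence. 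I expect the main obstacle to be the careful verification that the assignment $\alpha^*\mapsto$ (the explicit degree-$(-1)$ expression built from $\Delta_\beta(\vphi(\alpha))$) actually satisfies $d'\circ\Phi=\Phi\circ d$ on \emph{all} of $\Gamma$, not just on generators; this requires checking compatibility with the Leibniz rule and in particular that the relation $d^2=0$ (equivalently, the hyperpotential condition~\eqref{it:comm}) is preserved --- but this is precisely guaranteed because $(\rho'_\beta)$ was shown to be a genuine hyperpotential in the discussion around~\eqref{e:phihyper}, using Lemma~\ref{l:comm}. Once the generators-level compatibility is established, extension to $\Gamma$ is automatic by the universal property of the completed graded path algebra.
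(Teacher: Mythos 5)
Your overall strategy coincides with the paper's: determine $\Phi$ degree by degree, exploit $d'\Phi=\Phi d$ on the generators $\alpha^*$ and $t_i$, and reduce part~\eqref{it:G:weak} to the rescaling isomorphism $\alpha^*\mapsto c\alpha^*$, $t_i\mapsto ct_i$. However, the one step carrying the real content --- the explicit value of $\Phi(\alpha^*)$ in the ``only if'' direction --- is wrong as written. You propose (up to ``suitable interpretation'') an expression governed by $\Delta_\beta(\vphi(\alpha))$. Applying $d'$ to $\sum_\beta\Delta_\beta(\vphi(\alpha))\diamond\beta^*$ yields $\sum_\beta\Delta_\beta(\vphi(\alpha))\diamond\rho'_\beta$, which is \emph{not} $\vphi(\rho_\alpha)$: already for one loop with $\vphi(\alpha)=\beta+\beta^2$ and $\rho_\alpha=\alpha$ one gets $\rho'_\beta+2\beta\rho'_\beta=\beta+5\beta^2+8\beta^3+4\beta^4$, whereas $\vphi(\rho_\alpha)=\beta+\beta^2$. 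The correct assignment, which is where the work lies, is
\[
\Phi(\alpha^*)=\sum_{\beta\in Q'_1}\sum_{p=\alpha_1\dots\alpha_n}b^{\beta}_{p}\sum_{j\,:\,\alpha_j=\alpha}\vphi(\alpha_{j+1}\dots\alpha_n)\,\beta^*\,\vphi(\alpha_1\dots\alpha_{j-1}),\qquad \vphi^{-1}(\beta)=\sum_p b^{\beta}_p\,p,
\]
i.e.\ it is controlled by $\Delta_\alpha(\vphi^{-1}(\beta))$ (with $\vphi$ applied to both tensor factors), not by $\Delta_\beta(\vphi(\alpha))$; the appearance of $\vphi^{-1}$ is essential, and with this choice $d'\Phi(\alpha^*)=\vphi(\rho_\alpha)$ becomes exactly $(\rho_\alpha)=\vphi^{-1}_*((\rho'_\beta))$. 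Your hedge ``suitably interpreted so that applying $d'$ recovers $\vphi(\rho_\alpha)$'' assumes precisely the point that needs to be established.

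A second, related gap is in the ``if'' direction. You treat $\Phi(\alpha^*)$ as an arbitrary degree $-1$ element and say the resulting ambiguity is ``absorbed by $\img d_2$''; it is not. If $\Phi(\alpha^*)$ has arbitrary coefficients, the relation $d'\Phi(\alpha^*)=\vphi(\rho_\alpha)$ only says that $\vphi(\rho_\alpha)$ lies in the closed ideal generated by the $\rho'_\beta$, which is far weaker than right equivalence. The mechanism in the paper is that the hypothesis $\Phi(t_i)=t'_i$, fed into $d'\Phi(t_i)=\Phi(dt_i)$, forces the coefficients $b^{\beta}_{p,j}$ of $\Phi(\alpha^*)$ to be independent of the position $j$ and to assemble into $\vphi^{-1}(\beta)=\sum_p b^{\beta}_p p$; only after this rigidification does differentiating $\Phi(\alpha^*)$ produce the pushforward formula. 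So the $t_i$-condition is not a consistency check to be verified at the end --- it is what pins down $\Phi$ in degree $-1$. Once these two points are repaired, your outline matches the paper's proof, including the citation of \cite[Lemma~2.9]{KellerYang11} for the converse construction and the rescaling lemma for part~\eqref{it:G:weak}.
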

\begin{proof}
Let $\Gamma=\Gamma(Q,(\rho_\alpha))$ and $\Gamma'=\Gamma(Q',
(\rho'_\beta))$ be the Ginzburg dg-algebras of two hyperpotentials.
We determine the possible form of a continuous isomorphism
$\Phi \colon \Gamma \to \Gamma'$ such that $\Phi(e_i)=e'_i$
and $\Phi(t_i)=t'_i$ for all $i \in Q_0$. 
First, by looking at the degree 0
part, $\Phi$ induces a continuous isomorphism $\vphi \colon A \to A'$
with values $\vphi(e_i)=e'_i$ and $\vphi(\alpha)=\Phi(\alpha)$
for $i \in Q_0$ and $\alpha \in Q_1$.

Now, the set of elements $\vphi(u) \beta^* \vphi(v)$
where $\beta \in Q'_1$ and $u, v$ are paths in $Q$ such that
$v$ starts at $s(\beta)$ and $u$ ends at $t(\beta)$
forms a (topological) basis of the degree $-1$
part of $\Gamma'$. Hence we can write for $\alpha \in Q_1$
\[
\Phi(\alpha^*) = \sum_{u, v, \beta} b^{\alpha,\beta}_{u,v}
\vphi(u) \beta^* \vphi(v)
\]
for some scalars $b^{\alpha,\beta}_{u,v} \in K$.
Since $\Phi(e_i)=e'_i$ for all $i \in Q_0$, the coefficient
$b^{\alpha,\beta}_{u,v}$ can be non-zero only when $v$ ends at $s(\alpha)$
and $u$ starts at $t(\alpha)$. Thus, we can rewrite this sum
as ranging over all the non-trivial paths $p$ in $Q$ and all their
factorizations $p=v \alpha u$ as
\begin{equation} \label{e:Phi_first}
\Phi(\alpha^*) = \sum_{\beta \in Q'_1} \sum_{p=\alpha_1 \dots \alpha_n}
\sum_{j \,:\, \alpha_j = \alpha} b^{\beta}_{p,j}
\vphi(\alpha_{j+1} \dots \alpha_n) \beta^* \vphi(\alpha_1 \dots \alpha_{j-1})
\end{equation}
where $b^{\beta}_{p,j} \in K$ are some coefficients defined
for the paths $p=\alpha_1 \dots \alpha_n$ and $1 \leq j \leq n$.

We use now our assumption that $\Phi(t_i)=t'_i$ for all $i \in Q_0$ and
the fact that $\Phi$ should commute with the differentials, hence
\[
\sum_{\beta \in Q'_1} [\beta, \beta^*] =
\Phi \Bigl( \sum_{\alpha \in Q_1} [\alpha, \alpha^*] \Bigr)
= \sum_{\alpha \in Q_1} [\vphi(\alpha), \Phi(\alpha^*)] .
\]
Plugging in the expression~\eqref{e:Phi_first} and
comparing coefficients, we deduce that $\sum_p b^\beta_{p,1} = \beta$
and $b^\beta_{p,j+1} = b^{\beta}_{p,j}$ for all $1 \leq j < n$,
so we can rewrite~\eqref{e:Phi_first} as
\begin{equation} \label{e:Phi}
\Phi(\alpha^*) = \sum_{\beta \in Q'_1} \sum_{p=\alpha_1 \dots \alpha_n}
b^{\beta}_{p} \sum_{j \,:\, \alpha_j = \alpha} 
\vphi(\alpha_{j+1} \dots \alpha_n) \beta^* \vphi(\alpha_1 \dots \alpha_{j-1})
\end{equation}
where the coefficients $b^{\beta}_p$ are defined by the equations
$\vphi^{-1}(\beta) = \sum_p b^{\beta}_p p$ for all $\beta \in Q'_1$.

Taking differentials of~\eqref{e:Phi}, noting that
$\Phi(d \alpha^*) = \vphi(\rho_\alpha)$, we get
\begin{align*}
\vphi(\rho_\alpha) &= 
\sum_{\beta \in Q'_1} \sum_{p=\alpha_1 \dots \alpha_n}
b^{\beta}_{p} \sum_{j \,:\, \alpha_j = \alpha} 
\vphi(\alpha_{j+1} \dots \alpha_n) \rho_\beta
\vphi(\alpha_1 \dots \alpha_{j-1}) \\
&=
\sum_{\beta \in Q'_1} \sum_p b^{\beta}_{p}
\vphi\left(\Delta_\alpha(p) \diamond \vphi^{-1}(\rho_\beta)\right) =
\sum_{\beta \in Q'_1}
\vphi\left(\Delta_\alpha(\vphi^{-1}(\beta)) \diamond 
\vphi^{-1}(\rho_\beta)\right)
\end{align*}
or in other words, $(\rho_\alpha)_{\alpha \in Q_1} =
\vphi^{-1}_*((\rho_\beta)_{\beta \in Q'_1})$.

This proves one direction in part~\eqref{it:G:equiv}.
For the other direction, we define $\Phi \colon \Gamma \to \Gamma'$
by specifying its values on $e_i$, $\alpha$, $\alpha^*$ and $t_i$,
namely by setting $\Phi(e_i)=e'_i$, $\Phi(t_i)=t'_i$,
$\Phi(\alpha)=\vphi(\alpha)$ and finally $\Phi(\alpha^*)$ according
to~\eqref{e:Phi}. One then shows that such $\Phi$ is actually an
isomorphism in the same way as in~\cite[Lemma~2.9]{KellerYang11}.

Part~\eqref{it:G:weak} of the proposition now follows from
the next statement which can be verified by direct calculation.
\end{proof}

\begin{lemma}
Let $(Q,(\rho_\alpha)_{\alpha \in Q_1})$ be a hyperpotential and let
$c \in K^{\times}$. Then the continuous map of $K$-algebras
$\Phi \colon \Gamma(Q, (c \rho_\alpha)) \to \Gamma(Q, (\rho_\alpha))$
whose values on the generators are
\begin{align*}
\Phi(e_i) = e_i &,& \Phi(\alpha) = \alpha &,&
\Phi(\alpha^*) = c \alpha^* &,& \Phi(t_i) = c t_i
\end{align*}
for $i \in Q_0$ and $\alpha \in Q_1$, is an isomorphism of dg-algebras.
\end{lemma}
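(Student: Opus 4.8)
The plan is to verify directly that the prescribed map $\Phi$ is a well-defined continuous homomorphism of graded algebras commuting with the differentials, and then exhibit an explicit inverse. First I would check that $\Phi$ respects the grading: it sends the degree-$0$ generators $e_i,\alpha$ to degree-$0$ elements, the degree-$(-1)$ generators $\alpha^*$ to $c\alpha^*$ which is again in degree $-1$, and the degree-$(-2)$ generators $t_i$ to $ct_i$ in degree $-2$. Since $\Gamma(Q,(c\rho_\alpha))$ is the completion of the graded path algebra on $\wt Q$, specifying $\Phi$ on the (topological) generators and requiring it to be a continuous algebra homomorphism determines it uniquely, so it is well-defined.

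Next I would check that $\Phi$ commutes with the two differentials $d$ (on $\Gamma(Q,(c\rho_\alpha))$) and $d'$ (on $\Gamma(Q,(\rho_\alpha))$). It suffices to check this on generators, since both $\Phi d$ and $d'\Phi$ satisfy the Leibniz rule (the former because $d$ does and $\Phi$ is an algebra map, the latter because $d'$ does and $\Phi$ is an algebra map) and both are continuous. On $e_i$ and $\alpha$ both sides vanish. On $\alpha^*$ we have $d'\Phi(\alpha^*) = d'(c\alpha^*) = c\rho_\alpha$, while $\Phi(d\alpha^*) = \Phi(c\rho_\alpha) = c\rho_\alpha$ since $\rho_\alpha$ is a combination of paths in $Q$ on which $\Phi$ acts as the identity. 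On $t_i$ we compute $d'\Phi(t_i) = d'(ct_i) = c\,e_i\bigl(\sum_{\beta\in Q_1}[\beta,\beta^*]\bigr)e_i$, whereas $\Phi(dt_i) = \Phi\bigl(e_i(\sum_\beta[\beta,\beta^*])e_i\bigr) = e_i\bigl(\sum_\beta[\Phi(\beta),\Phi(\beta^*)]\bigr)e_i = e_i\bigl(\sum_\beta[\beta,c\beta^*]\bigr)e_i = c\,e_i\bigl(\sum_\beta[\beta,\beta^*]\bigr)e_i$; the two agree.

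Finally I would produce the inverse: the analogous map $\Psi \colon \Gamma(Q,(\rho_\alpha)) \to \Gamma(Q,(c\rho_\alpha))$ with $\Psi(e_i)=e_i$, $\Psi(\alpha)=\alpha$, $\Psi(\alpha^*)=c^{-1}\alpha^*$, $\Psi(t_i)=c^{-1}t_i$ is, by the same reasoning applied with $c^{-1}$ in place of $c$, a continuous dg-algebra homomorphism (here one uses that $d(\alpha^*)=c\rho_\alpha$ in the source of $\Psi$, so $d\Psi(\alpha^*)$ should be checked against the differential $\rho_\alpha$ of the target, giving $c^{-1}\cdot c\rho_\alpha = \rho_\alpha$ on both sides). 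Since $\Psi\Phi$ and $\Phi\Psi$ are continuous algebra endomorphisms fixing all the generators, they are the respective identities, so $\Phi$ is an isomorphism of dg-algebras. There is no real obstacle here; the only point requiring a little care is the bookkeeping of the three different degrees and remembering that the differential of $\alpha^*$ is $c\rho_\alpha$ (not $\rho_\alpha$) in the source Ginzburg algebra, which is exactly what forces the factor $c$ on $\alpha^*$ and $t_i$ and makes the differentials match.
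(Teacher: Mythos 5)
Your proof is correct and is precisely the ``direct calculation'' that the paper invokes without writing out: the paper states the lemma ``can be verified by direct calculation'' and gives no further details, and your verification (well-definedness on generators, commutation with the differentials checked on $e_i$, $\alpha$, $\alpha^*$, $t_i$ via the Leibniz rule, and the explicit inverse with $c^{-1}$) supplies exactly that calculation.
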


\subsection{Proof of Proposition~\protect{\ref{p:QP}}}

We will freely use some notions from the theory of quivers with potentials.
For details and explanations, we refer the reader to the paper~\cite{DWZ08}.

For part~\eqref{it:QP:hyper}, just note that
\begin{align*}
\sum_{i=1}^n \alpha_i \rho_{\alpha_i} =
\sum_{i=1}^n (\alpha_i \alpha_{i+1} \dots \alpha_{i-1})^e =
\sum_{i=1}^n (\alpha_{i+1} \dots \alpha_{i-1} \alpha_i)^e =
\sum_{i=1}^n \rho_{\alpha_i} \alpha_i
\end{align*}
(indices are taken modulo $n$, i.e.\ $\alpha_{n+1}=\alpha_1$ and
$\alpha_0 = \alpha_n$).

Let $W$ be a potential on $Q_m$. For an arrow $\alpha=\alpha_i$, set
\begin{align*}
\omega_\alpha = \alpha_i \alpha_{i+1} \ldots \alpha_{i-1} &&
\omega'_\alpha = \alpha_{i+1} \dots \alpha_{i-1}
\end{align*}
so that $\omega_\alpha = \alpha \omega'_\alpha$. Since the cycles
$\omega_\alpha^r$ and $\omega_\beta^r$ are cyclically equivalent for 
any two arrows $\alpha, \beta$ and $r \geq 1$, the potential $W$ is
cyclically equivalent to $P(\omega_\alpha)$ for some power series
$P(x) \in K[[x]]$ which is independent on the arrow $\alpha$, hence
we may assume that $W = P(\omega_\alpha)$.
Taking cyclic derivatives, we see that
$\partial_\alpha W = \partial_\alpha P(\omega_\alpha) =
P'(\omega_\alpha) \omega'_\alpha$ for any arrow~$\alpha$.

If $P'(x)=0$, then the Jacobian ideal vanishes
and the Jacobian algebra equals the completed path algebra
$\widehat{KQ_m}$. Otherwise,
\[
P'(x) = a_0 x^{d-1} + a_1 x^d + \ldots
\]
for some $a_0 \neq 0$ and $d \geq 2$. Note that $d$ cannot be divisible
by $\ch K$, as otherwise the derivative of $x^d$ would vanish and so
$a_0=0$, a contradiction.

We deduce that in the Jacobian algebra, for any arrow $\alpha$
\[
\rho_\alpha \mathrel{\mathop:}= \omega_\alpha^{d-1} \omega'_\alpha = -a_0^{-1}
\Big(
\sum_{i=1}^{\infty} \omega_\alpha^i (\omega_\alpha^{d-1} \omega'_\alpha)
\Big)
= -a_0^{-1} \Big(\sum_{i=1}^{\infty} \omega_\alpha^i \Big) \rho_\alpha
\]
hence the path $\rho_\alpha$ of length $md-1$ equals
a linear combination of paths of length at least $m(d+1)-1$.
Since we can repeatedly substitute for $\rho_\alpha$ the expression in the RHS,
we get that for any $N \geq 1$, the element
$\rho_\alpha$ equals a linear combination of paths of length at least $N$, and
hence it vanishes. Since no shorter paths are involved in any relation,
we deduce that
the Jacobian algebra equals $\gL_{m,d}$. This proves part~\eqref{it:QP:W}.
Part~\eqref{it:QP:Wme} follows by taking $P(x)=x^e$, observing that
$P'(x) = ex^{e-1}$ does not vanish by the assumption on the characteristic
of $K$.

To show part~\eqref{it:QP:none}, observe that by the Splitting Theorem
of~\cite{DWZ08}, we may assume that the potential is reduced, and hence
that the quiver equals $Q_m$. In view of part~\eqref{it:QP:W}, the
algebra $\gL_{m,e}$ is not a Jacobian algebra of any potential on $Q_m$.

\subsection{Proof of Proposition~\protect{\ref{p:cat}}}

We recall some facts on $\cD^b(\modf K D_n)$,
the bounded derived category of the path algebra of the Dynkin quiver $D_n$
on $n \geq 3$ vertices.
It has a translation functor $\Sigma$ and a Serre functor $S$,
and they are related by $S^{n-1} \simeq \Sigma^{n-2}$ if $n$ is even and
$S^{2n-2} \simeq \Sigma^{2n-4}$ if $n$ is odd (for this fractionally
Calabi-Yau property, see~\cite{MiyachiYekutieli01}).
The AR-translation on $\cD^b(\modf K D_n)$ is given by $\tau = S \Sigma^{-1}$,
and its AR-quiver is $\bZ D_n$, see Happel~\cite{Happel88}.
The effect of $\Sigma$ on the AR-quiver is given by $\tau^{-n+1}$ if $n$ is
even and by $\tau^{-n+1} \phi$ if $n$ is odd.

Consider the auto-equivalence $F = S^{1-m(e-1)} \Sigma^{m(e-1)-2}$ on
$\cD^b(\modf KD_{me})$ and let $\cC_{m,e} = \cD^b(\modf KD_{me})/F$
be the corresponding orbit category. 
In order to show that $\cC_{m,e}$ is a triangulated 2-Calabi-Yau
category, we use Proposition~\ref{p:CY} and distinguish two cases.
If $m$ is even, then, in the notation of Proposition~\ref{p:CY},
\begin{align*}
(d_1,e_1) = (me-2, me-1), && (d_2,e_2) = (2-m(e-1), 1-m(e-1))
\end{align*}
hence $d_1$, $d_2$ are both even and $e_1 d_2 - e_2 d_1 = m \neq 0$.
Therefore Proposition~\ref{p:CY} applies and the claim follows
since $(2,1) = (e-1)(d_1,e_1) + e(d_2,e_2) \in L$.

If $m$ is odd, then by our assumption $e$ is odd and
\begin{align*}
(d_1,e_1) = (2me-4, 2me-2), && (d_2,e_2) = (2-m(e-1), 1-m(e-1))
\end{align*}
so again $d_1$, $d_2$ are both even and $e_1 d_2 \neq e_2 d_1$.
Therefore Proposition~\ref{p:CY} applies and the claim follows from
$(2,1)=\frac{e-1}{2}(d_1,e_1) + e(d_2,e_2) \in L$.

The effect of $F$ on the AR-quiver of $\cD^b(\modf KD_{me})$
is given by that of 
\[
\tau^{1-m(e-1)} \Sigma^{-1} = \tau^{1-m(e-1)} \tau^{me-1} \phi^{me \bmod 2}
= \tau^m \phi^{m \bmod 2} = (\tau \phi)^m
\]
(using our assumption that $m(e-1)$ is even),
so by~\cite[Prop.~1.2]{BMRRT06} and~\cite[Prop.~1.3]{BMRRT06}
the category $\cC_{m,e}$ is Krull-Schmidt and the shape of its AR-quiver is
$\bZ D_{me}/\langle (\tau \phi)^m \rangle$.

Choose a vertex $v$ with $\phi(v) \neq v$.
Since $\cC$ is 2-CY, the suspension of any object is isomorphic to its
AR-translate. Using this, one can verify from
the mesh relations in $\bZ D_{me}$ that
the sum of the $m$ indecomposables corresponding to the vertices
\[
v, (\phi \tau)v, (\phi \tau)^2 v \ldots, (\phi \tau)^{m-1} v
\]
is a cluster-tilting object in $\cC_{m,e}$ whose endomorphism algebra
is isomorphic to $\gL_{m,e}$. An example for $m=4$ and $e=2$ is
shown in Figure~\ref{fig:C42}. Similar pictures appear in \S2.1
and \S2.2 of~\cite{Ringel08}.

\begin{figure}
\[
\xymatrix@=1pc{
& {\circ} \ar[dr] &&
{\bullet} \ar[dr] && {\circ} \ar[dr] &&
{\bullet} \ar[dr] & {} \\
{\circ} \ar[ur] \ar[r] \ar[dr] \ar@{--}[u] &
{\bullet} \ar[r] & {\circ} \ar[ur] \ar[r] \ar[dr] &
{\circ} \ar[r] & {\circ} \ar[ur] \ar[r] \ar[dr] &
{\bullet} \ar[r] & {\circ} \ar[ur] \ar[r] \ar[dr] &
{\circ} \ar[r] & {\circ} \ar@{--}[u] \\
& {\circ} \ar[ur] \ar[dr] &&
{\circ} \ar[ur] \ar[dr] && {\circ} \ar[ur] \ar[dr] &&
{\circ} \ar[ur] \ar[dr] \\
{\circ} \ar[ur] \ar[dr] \ar@{--}[uu] && {\circ} \ar[ur] \ar[dr] &&
{\circ} \ar[ur] \ar[dr] && {\circ} \ar[ur] \ar[dr] &&
{\circ} \ar@{--}[uu]
\\
& {\circ} \ar[ur] \ar[dr] &&
{\circ} \ar[ur] \ar[dr] && {\circ} \ar[ur] \ar[dr] &&
{\circ} \ar[ur] \ar[dr] \\
{\circ} \ar[ur] \ar[dr] \ar@{--}[uu] \ar@{--}[d]
&& {\circ} \ar[ur] \ar[dr] &&
{\circ} \ar[ur] \ar[dr] && {\circ} \ar[ur] \ar[dr] &&
{\circ} \ar@{--}[uu] \ar@{--}[d] 
\\
& {\circ} \ar[ur] &&
{\circ} \ar[ur] && {\circ} \ar[ur] &&
{\circ} \ar[ur] & {}
}
\]
\caption{The AR-quiver of $\cC_{4,2}$, where the left and right columns have
to be identified along the dashed lines.
The sum of the 4 indecomposables marked in $\bullet$
is a cluster-tilting object with endomorphism algebra $\gL_{4,2}$.}
\label{fig:C42}
\end{figure}
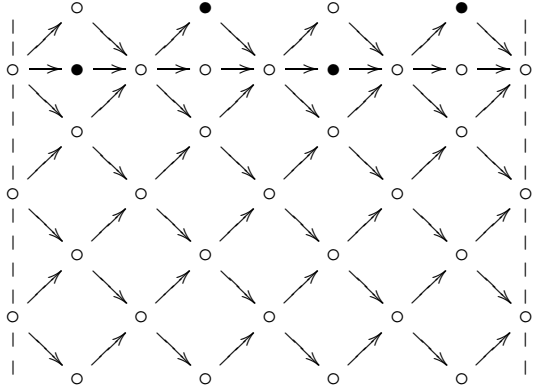

\subsection{Proof of Proposition~\protect{\ref{p:CY}} and its corollary}

Let $D=d_1 e_2 - d_2 e_1$. We have
\[
F^{e_1} = S^{e_1 e_2} \Sigma^{-e_1 d_2} \simeq \Sigma^{d_1 e_2 - e_1 d_2} = \Sigma^D ,
\]
hence if $D=0$ then $F^{e_1}$ is isomorphic to the identity functor
and the endomorphism algebra $\End_{\cC}(X) = \bigoplus_{n \in \bZ}
\Hom_{\dcH}(X, F^n X)$ is infinite-dimensional
for any $X \neq 0$.

Conversely, assume that $D \neq 0$ and let $X, Y \in \dcH$. Since $\cH$
has finite global dimension, we have
\[
\Hom_{\dcH}(X, \Sigma^m F^s Y) = 0
\]
for all $0 \leq s < |e_1|$ and $|m| \gg 0$.
Let $n \in \bZ$ and write it as $n=e_1 m + s$ with $0 \leq s < |e_1|$.
Then
\[
\Hom_{\dcH}(X, F^n Y) = \Hom_{\dcH}(X, F^{e_1 m + s} Y) =
\Hom_{\dcH}(X, \Sigma^{Dm} F^s Y)
\]
vanishes when $|n| \gg 0$ and therefore
$\Hom_\cC(X,Y) = \bigoplus_{n \in \bZ} \Hom_{\dcH}(X, F^n Y)$ is
finite-dimensional.
This shows the equivalence of the conditions in the proposition.

From now on assume that $D \neq 0$. In order to show that the 
orbit category $\cC$ is triangulated we use a result of
Keller~\cite[Theorem~1]{Keller05}. We have to verify conditions~(2)
and~(3) in that theorem. Indeed, condition~(2) holds since if $U \in \cH$,
then $F^{e_1} U \simeq \Sigma^D U$ hence only finitely many objects
$F^i U$ ($i \in \bZ$) could lie in $\cH$.
Moreover, since $\cH$ is hereditary, any indecomposable of $\dcH$
is of the form $\Sigma^n U$ for an indecomposable $U$ of $\cH$ and
some $n \in \bZ$. Writing $n=Dq+t$ with $0 \leq t < |D|$ we see that
$F^{-e_1 q}(\Sigma^n U) \simeq \Sigma^t U$ hence condition~(3) is 
satisfied with the integer $|D|$.

In order to show that $\cC$ is $(d,e)$-CY for any $(d,e) \in L$
we use a result of Dugas~\cite[\S9]{Dugas12}.
The suspension $\bar{\Sigma}$ and the Serre functor $\bar{S}$ on
$\cC$ are induced by $\Sigma$ and $S$, respectively, and
by our assumption $d_2(e_2-d_2)$ is even. We can 
therefore use Theorem~9.5 of~\cite{Dugas12} to deduce that
$\bar{S}^{e_2} \simeq \bar{\Sigma}^{d_2}$.
Now rewrite $F$ as
\[
F = S^{e_2-e_1} S^{e_1} \Sigma^{-d_2} \simeq S^{e_2-e_1} \Sigma^{d_1-d_2}
\]
and since $(d_1-d_2)(d_1-d_2+e_2-e_1)$ is even, by the same theorem
we deduce that $\bar{S}^{e_2-e_1} \simeq \bar{\Sigma}^{d_2-d_1}$.
Since $L$ is generated
by $(d_2,e_2)$ and $(d_2-d_1,e_2-e_1)$, we conclude that
$\cC$ is $(d,e)$-CY for any $(d,e) \in L$.

Note that in any case, even if we do not assume any parity restrictions
on $d_1$, $d_2$, $e_1$ and $e_2$, Theorem~9.5 of~\cite{Dugas12}
implies that $\bar{S}^{2e_2} \simeq \bar{\Sigma}^{2d_2}$ and
$\bar{S}^{2(e_2-e_1)} \simeq \bar{\Sigma}^{2(d_2-d_1)}$, hence
$\cC$ is $(d,e)$-CY for any $(d,e) \in 2L$.

The corollary follows from the following observation. We have
$L \otimes_{\bZ} \bQ = \bQ (d_1,e_1) + \bQ (d_2,e_2) = \bQ^2$ since
$\rank L = 2$. Therefore for any $r \in \bQ$ there are rationals
$s_1, s_2 \in \bQ$ such that
$(r,1) = s_1(d_1,e_1) + s_2(d_2,e_2)$. Multiplying by a common
denominator of $s_1$ and $s_2$ we deduce that $(nr, n) \in L$ for some
integer $n \neq 0$.

\subsection{A cluster category of type $G_2$}
\label{sec:G2}

There are several approaches to categorify cluster algebras corresponding
to non simply-laced Dynkin diagrams, see for example the work by
Demonet~\cite{Demonet11}.

We proceed as in the proof of Proposition~\ref{p:cat}, starting with
a Dynkin quiver of type $E_8$.
Here, $\cD^b(\modf KE_8)$ has translation functor $\Sigma$ and Serre functor
$S$ which are related by $S^{15} \simeq \Sigma^{14}$.
Consider the auto-equivalence $F = S^4 \Sigma^{-4}$ and let
$\cC = \cD^b(\modf KE_8)/F$ be the corresponding orbit category.
From Proposition~\ref{p:CY} we deduce that $\cC$ is a triangulated 2-CY
category, as we have $(2,1)=4 \cdot (4,4)-(14,15)$.
The effect of $F$ on the AR-quiver of $\cD^b(\modf KE_8)$, which is $\bZ E_8$,
is given by $\tau^4$, hence by~\cite[Prop.~1.3]{BMRRT06} the AR-quiver of
$\cC$ has the shape $\bZ E_8/\langle \tau^4 \rangle$.

In order to determine the cluster-tilting objects in $\cC$, we make the
following observations which could be verified on a computer, and refer to
Figure~\ref{fig:E8} for details. We denote by $\bSigma$ the translation on
$\cC$ and observe that $\bSigma^4$ is the identity on $\cC$ since
$(4,0) = 15 \cdot (4,4) - 4 \cdot (14,15)$.
\begin{enumerate}
\renewcommand{\labelenumi}{\theenumi.}
\item
The only indecomposable objects $Z$ which are rigid (i.e.\
$\Hom_{\cC}(Z,\bSigma Z)=0$)
are of the form $\bSigma^i X$ or $\bSigma^i Y$ ($0 \leq i < 4$),
where $X$ and $Y$ are as in Figure~\ref{fig:E8}.

\item
The indecomposables $Z$ such that $\Hom_{\cC}(X,Z)=0$ are shown in 
Fig.~\ref{fig:E8}(a).

\item
The indecomposables $Z$ such that $\Hom_{\cC}(Y,Z)=0$ are shown in
Fig.~\ref{fig:E8}(b).
\end{enumerate}

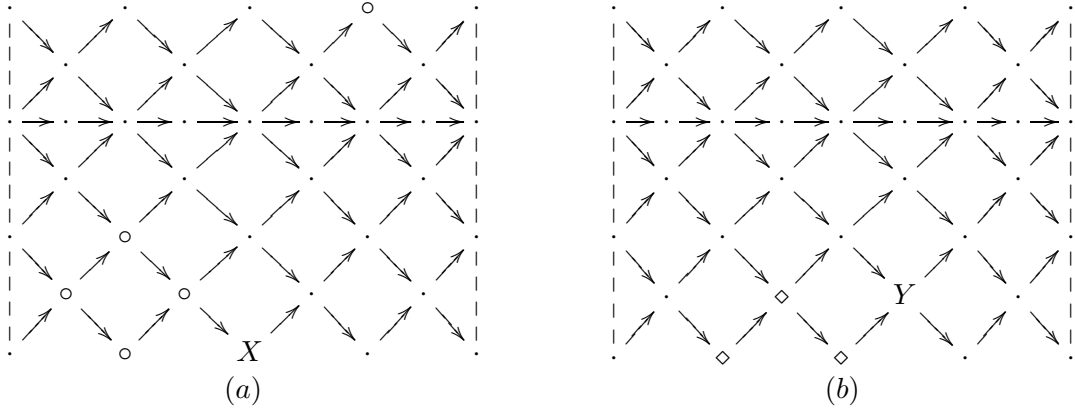
\begin{figure}
\[
\begin{array}{ccc}
\xymatrix@=0.85pc{
{\cdot} \ar[dr] && {\cdot} \ar[dr] &&
{\cdot} \ar[dr] && {\circ} \ar[dr] &&
{\cdot} \\
& {\cdot} \ar[ur] \ar[dr] && {\cdot} \ar[ur] \ar[dr] &&
{\cdot} \ar[ur] \ar[dr] && {\cdot} \ar[ur] \ar[dr] \\
{\cdot} \ar@{--}[uu] \ar[ur] \ar[r] \ar[dr] & {\cdot} \ar[r] &
{\cdot} \ar[ur] \ar[r] \ar[dr] & {\cdot} \ar[r] &
{\cdot} \ar[ur] \ar[r] \ar[dr] & {\cdot} \ar[r] &
{\cdot} \ar[ur] \ar[r] \ar[dr] & {\cdot} \ar[r] &
{\cdot} \ar@{--}[uu] \\
& {\cdot} \ar[ur] \ar[dr] && {\cdot} \ar[ur] \ar[dr] &&
{\cdot} \ar[ur] \ar[dr] && {\cdot} \ar[ur] \ar[dr] \\
{\cdot} \ar@{--}[uu] \ar[ur] \ar[dr] && {\circ} \ar[ur] \ar[dr] &&
{\cdot} \ar[ur] \ar[dr] && {\cdot} \ar[ur] \ar[dr] &&
{\cdot} \ar@{--}[uu] \\
& {\circ} \ar[ur] \ar[dr] && {\circ} \ar[ur] \ar[dr] &&
{\cdot} \ar[ur] \ar[dr] && {\cdot} \ar[ur] \ar[dr] \\
{\cdot} \ar@{--}[uu] \ar[ur] && {\circ} \ar[ur] &&
{X} \ar[ur] && {\cdot} \ar[ur] &&
{\cdot} \ar@{--}[uu]
}
& \qquad &
\xymatrix@=0.85pc{
{\cdot} \ar[dr] && {\cdot} \ar[dr] &&
{\cdot} \ar[dr] && {\cdot} \ar[dr] &&
{\cdot} \\
& {\cdot} \ar[ur] \ar[dr] && {\cdot} \ar[ur] \ar[dr] &&
{\cdot} \ar[ur] \ar[dr] && {\cdot} \ar[ur] \ar[dr] \\
{\cdot} \ar@{--}[uu] \ar[ur] \ar[r] \ar[dr] & {\cdot} \ar[r] &
{\cdot} \ar[ur] \ar[r] \ar[dr] & {\cdot} \ar[r] &
{\cdot} \ar[ur] \ar[r] \ar[dr] & {\cdot} \ar[r] &
{\cdot} \ar[ur] \ar[r] \ar[dr] & {\cdot} \ar[r] &
{\cdot} \ar@{--}[uu] \\
& {\cdot} \ar[ur] \ar[dr] && {\cdot} \ar[ur] \ar[dr] &&
{\cdot} \ar[ur] \ar[dr] && {\cdot} \ar[ur] \ar[dr] \\
{\cdot} \ar@{--}[uu] \ar[ur] \ar[dr] && {\cdot} \ar[ur] \ar[dr] &&
{\cdot} \ar[ur] \ar[dr] && {\cdot} \ar[ur] \ar[dr] &&
{\cdot} \ar@{--}[uu] \\
& {\cdot} \ar[ur] \ar[dr] && {\diamond} \ar[ur] \ar[dr] &&
{Y} \ar[ur] \ar[dr] && {\cdot} \ar[ur] \ar[dr] \\
{\cdot} \ar@{--}[uu] \ar[ur] && {\diamond} \ar[ur] &&
{\diamond} \ar[ur] && {\cdot} \ar[ur] &&
{\cdot} \ar@{--}[uu]
}
\\
(a) & & (b)
\end{array}
\]
\caption{Each picture shows the AR-quiver of $\cC$, where the left and right
columns have to be identified along the dashed lines.
In~(a) we mark by $\circ$ the indecomposables $Z$ with $\Hom_{\cC}(X,Z)=0$,
whereas in~(b) we mark by $\diamond$ those $Z$ such that $\Hom_{\cC}(Y,Z)=0$.}
\label{fig:E8}
\end{figure}

It follows that the cluster-tilting objects in $\cC$ are
$\bSigma^i X \oplus \bSigma^j Y$, where $0 \leq i < 4$ and $j \in \{i, i+1\}$,
their endomorphism algebras are given by the quivers
\begin{equation} \tag{$\star$} \label{e:CTG2}
\xymatrix{
{\bullet} \ar@{-}[r]^{\alpha} & {\bullet} \ar@(ur,dr)[]^{\beta}
}
\end{equation}
(where the edge $\alpha$ can be oriented arbitrarily) with the relation
$\beta^3=0$, and their exchange graph is an octagon as shown in
Figure~\ref{fig:exchange}.

The exchange graph of the cluster algebra of type $G_2$ is also an
octagon (see e.g.\ the last example in~\cite[\S2]{Keller10})
and the connection is explained by the following observation.
It is possible to compute the cluster character in the sense of
Palu~\cite{Palu08} corresponding to the cluster-tilting object
$X \oplus \bSigma Y$.
With this choice, the AR-quiver of the resulting 2-CY-tilted algebra
is the one shown in~\cite[Fig.~19]{Gabriel80}, so we can use the
dimension vectors listed there to aid in the calculations. More calculations
are simplified by the properties of a cluster character. The resulting
values on the rigid indecomposable objects of $\cC$ are shown 
in Figure~\ref{fig:variables},
so there is a bijection compatible with mutations
between the rigid indecomposable objects of $\cC$
and the cluster variables in the cluster algebra of type $G_2$ such that
(basic) cluster-tilting objects in $\cC$ correspond to the clusters.

\begin{figure}
\[
\xymatrix@!0{
&&& {X \oplus Y} \ar@{-}[drr] \\
& {\bSigma^3 X \oplus Y} \ar@{-}[urr]
&&&& {X \oplus \bSigma Y} \ar@{-}[ddr] \\ \\
{\bSigma^3 X \oplus \bSigma^3 Y} \ar@{-}[uur]
&&&&&& {\bSigma X \oplus \bSigma Y} \ar@{-}[ddl] \\ \\
& {\bSigma^2 X \oplus \bSigma^3 Y} \ar@{-}[uul]
&&&& {\bSigma X \oplus \bSigma^2 Y} \ar@{-}[dll] \\
&&& {\bSigma^2 X \oplus \bSigma^2 Y} \ar@{-}[ull]
}
\]
\caption{The exchange graph of the cluster-tilting objects in $\cC$.}
\label{fig:exchange}
\end{figure}
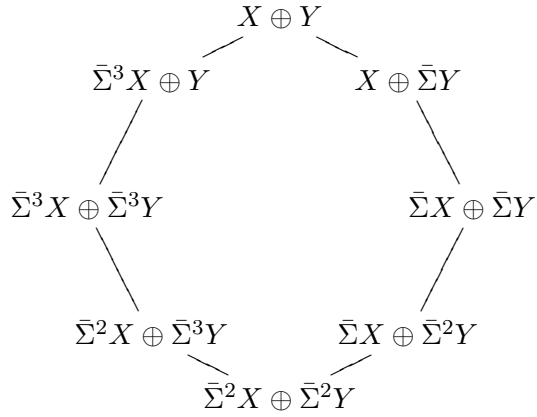

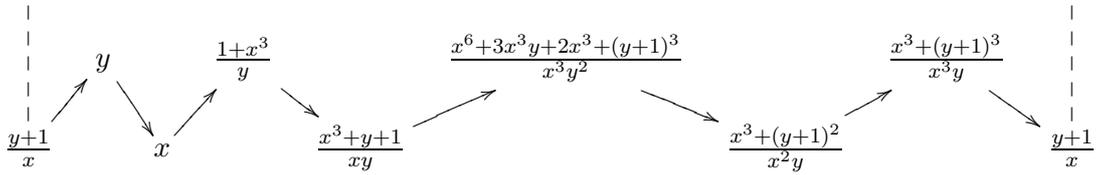
\begin{figure}
\[
\xymatrix@=0.8pc{
{} &&&&&&&& {} \\
& {y} \ar[dr] && {\frac{1+x^3}{y}} \ar[dr] 
&& {\frac{x^6+3x^3y+2x^3+(y+1)^3}{x^3y^2}} \ar[dr]
&& {\frac{x^3+(y+1)^3}{x^3y}} \ar[dr] \\
{\frac{y+1}{x}} \ar@{--}[uu] \ar[ur] && {x} \ar[ur]
&& {\frac{x^3+y+1}{xy}} \ar[ur]
&& {\frac{x^3+(y+1)^2}{x^2y}} \ar[ur] 
&& {\frac{y+1}{x}} \ar@{--}[uu]
}
\]
\caption{The values of a cluster character on the rigid indecomposable
objects of $\cC$ are the cluster variables of the cluster algebra
of type $G_2$.}
\label{fig:variables}
\end{figure}

Note that $F^4 = S^{16} \Sigma^{-16} \simeq S \Sigma^{-2}$, hence as in the
cases treated by Bertani-{\O}kland and Oppermann
in~\cite{BertaniOppermann11}, there is a covering functor from
the cluster category of type $E_8$ to $\cC$. 
Each of the algebras depicted in~\eqref{e:CTG2} is obtained from
a corresponding cluster-tilted algebra of type $E_8$ whose quiver is shown
below (where the edges labeled $\alpha$ should have the same orientation
either pointing inwards or outwards with respect to the inner square)
\[
\xymatrix@=1pc{
&& {\bullet} \ar@{-}[d]^{\alpha} \\
&& {\bullet} \ar[dl]_{\beta} \\
{\bullet} \ar@{-}[r]^{\alpha} & {\bullet} \ar[dr]_{\beta}
&& {\bullet} \ar[ul]_{\beta} & {\bullet} \ar@{-}[l]^{\alpha} \\
&& {\bullet} \ar[ur]_{\beta} \\
&& {\bullet} \ar@{-}[u]^{\alpha}
}
\]
by identifying all the arrows with the same label
(equivalently, by dividing modulo the action of the cyclic group
$\bZ/4\bZ$ on the quiver by rotations).
These algebras appear as items 1562 and 1574 in the lists supplementing
the paper~\cite{BHL13}.

\bibliographystyle{amsplain}
\bibliography{jac2cy}

\providecommand{\bysame}{\leavevmode\hbox to3em{\hrulefill}\thinspace}
\providecommand{\MR}{\relax\ifhmode\unskip\space\fi MR }
\providecommand{\MRhref}[2]{%
  \href{http://www.ams.org/mathscinet-getitem?mr=#1}{#2}
}
\providecommand{\href}[2]{#2}
\begin{thebibliography}{10}

\bibitem{Amiot07}
Claire Amiot, \emph{On the structure of triangulated categories with finitely
  many indecomposables}, Bull. Soc. Math. France \textbf{135} (2007), no.~3,
  435--474.

\bibitem{Amiot09}
\bysame, \emph{Cluster categories for algebras of global dimension 2 and
  quivers with potential}, Ann. Inst. Fourier (Grenoble) \textbf{59} (2009),
  no.~6, 2525--2590.

\bibitem{Amiot11}
\bysame, \emph{On generalized cluster categories}, Representations of algebras
  and related topics, EMS Ser. Congr. Rep., Eur. Math. Soc., Z\"urich, 2011,
  pp.~1--53.

\bibitem{BKL10}
M.~Barot, D.~Kussin, and H.~Lenzing, \emph{The cluster category of a canonical
  algebra}, Trans. Amer. Math. Soc. \textbf{362} (2010), no.~8, 4313--4330.

\bibitem{BarotGeiss12}
Michael Barot and Christof Geiss, \emph{Tubular cluster algebras {I}:
  categorification}, Math. Z. \textbf{271} (2012), no.~3-4, 1091--1115.

\bibitem{BHL13}
Janine Bastian, Thorsten Holm, and Sefi Ladkani, \emph{Derived equivalence
  classification of the cluster-tilted algebras of {D}ynkin type {$E$}},
  Algebr. Represent. Theory \textbf{16} (2013), no.~2, 527--551.

\bibitem{BertaniOppermann11}
Marco~Angel Bertani-{\O}kland and Steffen Oppermann, \emph{Mutating loops and
  2-cycles in 2-{CY} triangulated categories}, J. Algebra \textbf{334} (2011),
  195--218.

\bibitem{BIRS09}
A.~B. Buan, O.~Iyama, I.~Reiten, and J.~Scott, \emph{Cluster structures for
  2-{C}alabi-{Y}au categories and unipotent groups}, Compos. Math. \textbf{145}
  (2009), no.~4, 1035--1079.

\bibitem{BIRS11}
A.~B. Buan, O.~Iyama, I.~Reiten, and D.~Smith, \emph{Mutation of
  cluster-tilting objects and potentials}, Amer. J. Math. \textbf{133} (2011),
  no.~4, 835--887.

\bibitem{BMRRT06}
Aslak~Bakke Buan, Robert Marsh, Markus Reineke, Idun Reiten, and Gordana
  Todorov, \emph{Tilting theory and cluster combinatorics}, Adv. Math.
  \textbf{204} (2006), no.~2, 572--618.

\bibitem{BIKR08}
Igor Burban, Osamu Iyama, Bernhard Keller, and Idun Reiten, \emph{Cluster
  tilting for one-dimensional hypersurface singularities}, Adv. Math.
  \textbf{217} (2008), no.~6, 2443--2484.

\bibitem{Demonet11}
Laurent Demonet, \emph{Categorification of skew-symmetrizable cluster
  algebras}, Algebr. Represent. Theory \textbf{14} (2011), no.~6, 1087--1162.

\bibitem{DWZ08}
Harm Derksen, Jerzy Weyman, and Andrei Zelevinsky, \emph{Quivers with
  potentials and their representations. {I}. {M}utations}, Selecta Math. (N.S.)
  \textbf{14} (2008), no.~1, 59--119.

\bibitem{Dugas12}
Alex Dugas, \emph{Resolutions of mesh algebras: periodicity and {C}alabi-{Y}au
  dimensions}, Math. Z. \textbf{271} (2012), no.~3-4, 1151--1184.

\bibitem{ErdmannSkowronksi06}
Karin Erdmann and Andrzej Skowro{\'n}ski, \emph{The stable {C}alabi-{Y}au
  dimension of tame symmetric algebras}, J. Math. Soc. Japan \textbf{58}
  (2006), no.~1, 97--128.

\bibitem{Gabriel80}
Peter Gabriel, \emph{Auslander-{R}eiten sequences and representation-finite
  algebras}, Representation theory, {I} ({P}roc. {W}orkshop, {C}arleton
  {U}niv., {O}ttawa, {O}nt., 1979), Lecture Notes in Math., vol. 831, Springer,
  Berlin, 1980, pp.~1--71.

\bibitem{GeigleLenzing87}
Werner Geigle and Helmut Lenzing, \emph{A class of weighted projective curves
  arising in representation theory of finite-dimensional algebras},
  Singularities, representation of algebras, and vector bundles ({L}ambrecht,
  1985), Lecture Notes in Math., vol. 1273, Springer, Berlin, 1987,
  pp.~265--297.

\bibitem{GKO13}
Christof Geiss, Bernhard Keller, and Steffen Oppermann, \emph{{$n$}-angulated
  categories}, J. Reine Angew. Math. \textbf{675} (2013), 101--120.

\bibitem{GLS13}
Christof Geiss, Daniel Labardini-Fragoso, and Jan Schr\"{o}er, \emph{The
  representation type of {J}acobian algebras}, \texttt{arXiv:1308.0478}.

\bibitem{Ginzburg06}
Victor Ginzburg, \emph{{C}alabi-{Y}au algebras}, \texttt{arXiv:math/0612139}.

\bibitem{Happel88}
Dieter Happel, \emph{Triangulated categories in the representation theory of
  finite-dimensional algebras}, London Mathematical Society Lecture Note
  Series, vol. 119, Cambridge University Press, Cambridge, 1988.

\bibitem{HerschendIyama11}
Martin Herschend and Osamu Iyama, \emph{Selfinjective quivers with potential
  and 2-representation-finite algebras}, Compos. Math. \textbf{147} (2011),
  no.~6, 1885--1920.

\bibitem{HolmJorgensen13}
Thorsten Holm and Peter J{\o}rgensen, \emph{Realizing higher cluster categories
  of {D}ynkin type as stable module categories}, Q. J. Math. \textbf{64}
  (2013), no.~2, 409--435.

\bibitem{Keller05}
Bernhard Keller, \emph{On triangulated orbit categories}, Doc. Math.
  \textbf{10} (2005), 551--581.

\bibitem{Keller10}
\bysame, \emph{Cluster algebras, quiver representations and triangulated
  categories}, Triangulated categories, London Math. Soc. Lecture Note Ser.,
  vol. 375, Cambridge Univ. Press, Cambridge, 2010, pp.~76--160.

\bibitem{Keller11}
\bysame, \emph{Deformed {C}alabi-{Y}au completions}, J. Reine Angew. Math.
  \textbf{654} (2011), 125--180, With an appendix by Michel Van den Bergh.

\bibitem{KellerReiten07}
Bernhard Keller and Idun Reiten, \emph{Cluster-tilted algebras are {G}orenstein
  and stably {C}alabi-{Y}au}, Adv. Math. \textbf{211} (2007), no.~1, 123--151.

\bibitem{KellerYang11}
Bernhard Keller and Dong Yang, \emph{Derived equivalences from mutations of
  quivers with potential}, Adv. Math. \textbf{226} (2011), no.~3, 2118--2168.

\bibitem{Lamberti14}
Lisa Lamberti, \emph{Repetitive higher cluster categories of type {$A_n$}}, J.
  Algebra Appl. \textbf{13} (2014), no.~2, 1350091, 21.

\bibitem{LodayQuillen84}
Jean-Louis Loday and Daniel Quillen, \emph{Cyclic homology and the {L}ie
  algebra homology of matrices}, Comment. Math. Helv. \textbf{59} (1984),
  no.~4, 569--591.

\bibitem{MiyachiYekutieli01}
Jun-ichi Miyachi and Amnon Yekutieli, \emph{Derived {P}icard groups of
  finite-dimensional hereditary algebras}, Compositio Math. \textbf{129}
  (2001), no.~3, 341--368.

\bibitem{Palu08}
Yann Palu, \emph{Cluster characters for 2-{C}alabi-{Y}au triangulated
  categories}, Ann. Inst. Fourier (Grenoble) \textbf{58} (2008), no.~6,
  2221--2248.

\bibitem{Ringel08}
Claus~Michael Ringel, \emph{The self-injective cluster-tilted algebras}, Arch.
  Math. (Basel) \textbf{91} (2008), no.~3, 218--225.

\bibitem{Ringel11}
\bysame, \emph{The minimal representation-infinite algebras which are special
  biserial}, Representations of algebras and related topics, EMS Ser. Congr.
  Rep., Eur. Math. Soc., Z\"urich, 2011, pp.~501--560.

\bibitem{Thomas07}
Hugh Thomas, \emph{Defining an {$m$}-cluster category}, J. Algebra \textbf{318}
  (2007), no.~1, 37--46.

\bibitem{vanRoosmalen12}
Adam-Christiaan van Roosmalen, \emph{Abelian hereditary fractionally
  {C}alabi-{Y}au categories}, Int. Math. Res. Not. (2012), no.~12, 2708--2750.

\end{thebibliography}

\end{document}